\documentclass[a4paper,12pt]{amsart}
\usepackage[T1]{fontenc}
\usepackage[latin1]{inputenc}
\usepackage{fullpage}
\usepackage{times}
\usepackage{graphicx}
\usepackage[colorlinks=true]{hyperref}
\usepackage{color}
\usepackage{amsmath,amsfonts}
\numberwithin{equation}{section}
\usepackage{amsmath, amsthm}
\usepackage{hyperref}
    \usepackage{dsfont}
    \usepackage{fancybox}
    \usepackage{amssymb}

\newcommand{\Rd}{\R^{d-1} \times \left\{0 \right\}}
\newcommand{\R}{\mathbb{R}}

\newcommand{\disp}[1]{\displaystyle{#1}}

\newcommand{\1}{\mathds{1}}

 \newcommand{\spane}{\mathrm{span}}

\newcommand{\lr}[1]{\langle #1 \rangle}

\newcommand{\GG}{\mathcal{G}}
\newcommand{\RR}{\mathcal{R}}

\newcommand{\CC}{\mathcal{C}}

\newcommand{\Es}{E^\ast}
\newcommand{\fs}{f^\ast}

\newcommand{\fns}{f_n^\ast}
\newcommand{\TT}{\mathcal{T}}
\newcommand{\Ss}{\mathcal{S}}
\newcommand{\BB}{\mathcal{B}}
\newcommand{\BBt}{\tilde{\mathcal{B}}}
\newcommand{\II}{\mathcal{I}}
\newcommand{\RRt}{\tilde{\mathcal{\RR}}}
\newcommand{\MM}{\mathcal{M}}
\newcommand{\HHH}{\mathcal{H}}

\title{BEST CONSTANT AND VALUE OF EXTREMIZERS FOR A $k$-PLANE TRANSFORM INEQUALITY}
\date{\today}
\author{ALEXIS DROUOT}

\newtheorem{thm}{Theorem}[section]

\newtheorem{lem}[thm]{Lemma}
\newtheorem{proposition}[thm]{Proposition}
\newtheorem{definition}[thm]{Definition}

\newtheorem{theorem}[thm]{Theorem}

\begin{document}
\hypersetup{
bookmarks=true,
bookmarksopen=true
}

\begin{abstract}
The $k$-plane transform $\RR_k$ acting on test functions on $\R^d$ satisfies a dilation-invariant $L^p \rightarrow L^q$ inequality for some exponents $p,q$. We will explicit some extremizers and the value of the best constant for any value of $k$ and $d$, solving the endpoint case of a conjecture from Baernstein and Loss. This extends their own result for $k=2$ and Christ's result for $k=d-1$.
\end{abstract}

\maketitle
\section{Introduction}

Let us choose $d \geq 2$, $1 \leq k \leq d-1$ and denote by $\GG_k$ the set of all $k$-planes in $\R^d$, that means affine subspaces in $\R^d$ with dimension $k$. We define the $k$-plane transform of a continuous function with compact support $f: \R^d \rightarrow \R$ as 
\begin{equation*}
\RR_kf\left(\Pi\right) = \int_{\Pi} f d\lambda_\Pi
\end{equation*}
where $\Pi \in \GG_k$ and the measure $\lambda_\Pi$ is the surface Lebesgue measure on $\Pi$. The operator $\RR_k$ is known as the Radon transform for $k=d-1$ and as the X-ray transform for $k=1$. It is well known since the works of Oberlin and Stein \cite{oberlin map}, Drury \cite{drury} and Christ \cite{lorentz} that $\RR_k$ can be extended from $L^\frac{d+1}{k+1}\left(\R^d\right)$ to $L^{d+1}\left(\GG_k\right)$ for a certain measure on $\GG_k$ that needs to be defined. Let us denote by $\MM_k$ the submanifold of $\GG_k$ of all $k$-planes containing $0$. The Lebesgue measure on $\R^d$ induces a natural measure on $\MM_k$: there exists a unique probability measure $\mu_k$ on $\MM_k$ invariant in the following sense: if $\Omega$ is an orthogonal map, $P$ is a subset of $\MM_k$, then $\mu_k\left(P\right) = \mu_k\left(\Omega P\right)$. The construction of this measure can be found in \cite{construct}. This induces a measure on $\GG_k$, $\sigma_k$, defined as follows:
\begin{equation}\label{measure}
\sigma_k\left(A\right) = \disp{\int_{\Pi \in \MM_k} \lambda \left( \left\{ x \in \Pi^\perp, x + \Pi \in A \right\} \right) d\mu_k\left(\Pi\right)},
\end{equation}
where $\lambda$ designs the Lebesgue surface measure on the $d-k$-plane. \eqref{measure} defines a measure on $\GG_k$ invariant under translations and rotations in the following sense: if $\Omega$ is an orthogonal map, $P$ is a subset of $\GG_k$, and $x \in \R^d$, then $\sigma_k\left(P\right) = \sigma_k\left(\Omega P+x \right)$.\\

The $L^\frac{d+1}{k+1}\left( \R^d \right)$ to  $L^{d+1}\left( \GG_k, \sigma_k \right)$ boundedness of $\RR_k$ leads to the inequality
\begin{equation}\label{map1}
\disp{\| \RR_k f   \|_{L^{d+1}\left( \GG_k, \sigma_k \right)} \leq A\left(k,d\right) \| f \|_{L^\frac{d+1}{k+1}\left( \R^d \right)}},
\end{equation} 
for a certain constant $A\left(k,d\right)$ chosen to be optimal.\\

Some standard questions appear:
\begin{enumerate}
\item[1-] What is the best constant in the above inequality?
\item[2-] What are the extremizers for this inequality?
\item[3-] Is any extremizing sequence relatively compact -modulo the group of symmetries?
\item[4-] What can we say about functions satisfying $\| \RR_k f  \|_{d+1} \geq c \| f \|_\frac{d+1}{k+1}$? \\
\end{enumerate}

Some of the answers are already known for some values of $k$. In \cite{conjec}, Baernstein and Loss solved the first question for the special case $k=2$, and formulated a conjecture about an extremizer value for a larger class of $L^p \rightarrow L^q$ inequalities. Christ solved their conjecture and answered all the above questions with the three papers \cite{qe christ}, \cite{extr christ}, \cite{rdextr}, for the case $k=d-1$.\\

By a quite different approach, we will give in here a proof of Baernstein and Loss' conjecture \textit{for any value of $k,d$} in the inequality \eqref{map1}. Note that this concerns only the endpoint case of their general conjecture. The value of the extremizers provides the explicit value of the best constant in the inequality \eqref{map1}.

\subsection*{Main result} Our main result is the following theorem:

\begin{theorem}\label{extrem}
\begin{enumerate}
\item[$\left(i\right)$] There exist radial, nonincreasing extremizers for \eqref{map1}. Moreover, any extremizing sequence of nonincreasing, radial functions is relatively compact -modulo the group of dilations.
\item[$\left(ii\right)$] Some extremizers for \eqref{map1} are given by
\begin{equation*}
h\left(x\right) = \left[\dfrac{C}{1+\|Lx\|^2} \right]^\frac{k+1}{2}
\end{equation*}
where $L$ is any invertible affine map on $\R^d$, and $C$ is a constant. 
\item[$\left(iii\right)$] The best constant $A\left(k,d\right)$ is equal to
\begin{equation}\label{extremv}
\dfrac{\| \RR_k h \|_{d+1}}{\| h \|_{\frac{d+1}{k+1}}} = \left[2^{k-d} \dfrac{|S^k|^d}{|S^d|^k} \right]^\frac{1}{d+1}
\end{equation}
where $|S^{i-1}|$ denotes the Lebesgue surface measure of the $i-1$-sphere.
\end{enumerate}
\end{theorem}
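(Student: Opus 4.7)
My plan is to proceed in three stages -- rearrangement, compactness, and identification of the extremizer -- followed by an explicit computation.

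\emph{Stage 1 (Rearrangement).} I would first establish the symmetrization inequality $\|\RR_k f^\ast\|_{d+1} \geq \|\RR_k f\|_{d+1}$ where $f^\ast$ denotes the symmetric decreasing rearrangement of $|f|$. This was proved by Baernstein--Loss for $k=2$ and by Christ for $k=d-1$; for general $k$ I would try iterated Steiner symmetrizations in codimension-one directions, exploiting that the $k$-plane transform factors nicely over hyperplane slicings, or use the decomposition \eqref{measure} to reduce to a one-dimensional rearrangement on each fiber $\Pi^\perp$. Combined with the isometry $\|f^\ast\|_p = \|f\|_p$, this reduces the existence of extremizers to finding them within the class of radial nonincreasing functions.

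\emph{Stage 2 (Compactness and identification).} Given a radial nonincreasing extremizing sequence $(f_n)$, normalize by dilation so that a chosen sublevel set has fixed Lebesgue measure. Helly's selection theorem yields a pointwise a.e.\ limit $f_\infty$ along a subsequence. The pointwise bound $f_n(r) \lesssim r^{-(k+1)} \|f_n\|_{(d+1)/(k+1)}$ for radial nonincreasing $L^p$ functions, together with $L^p$-equiintegrability at $0$ (from the normalization) and at $\infty$ (from the uniform $L^p$ bound), lets one pass to the limit in both norms via Fatou and dominated convergence, producing an extremizer $f_\infty$ and yielding the compactness assertion in $(i)$. To identify $f_\infty$ with a dilate of $h$, I would exploit that the exponents $(d+1)/(k+1)$ and $d+1$ are exactly those rendering $\RR_k$ conformally covariant: via stereographic projection, the functional pulls back to a rotation-invariant functional on $S^d$ in which $h$ corresponds to a constant. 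An Euler--Lagrange analysis on $S^d$, or a competing-symmetries argument in the spirit of Carlen--Loss, then identifies the radial nonincreasing extremizer uniquely up to the invariance group, which on the Euclidean side is the full affine group appearing in $(ii)$.

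\emph{Stage 3 (Explicit value of the constant).} With $L = \Id$ and any choice of $C$, part $(iii)$ is a direct computation. Parameterizing $\Pi = \Pi_0 + v$ with $\Pi_0 \in \MM_k$ and $v \in \Pi_0^\perp$, the substitution $y = \sqrt{1+|v|^2}\, u$ in the integral over $\Pi$ gives $\RR_k h(\Pi_0 + v) = \frac{C\,|S^k|}{2}\,(1+|v|^2)^{-1/2}$. Integrating $|\RR_k h|^{d+1}$ against $\sigma_k$ via \eqref{measure} reduces to a single radial Beta-type integral over $\R^{d-k}$. A parallel evaluation of $\|h\|_{(d+1)/(k+1)}^{(d+1)/(k+1)} = C^{(d+1)/(k+1)} \int_{\R^d}(1+|x|^2)^{-(d+1)/2}\,dx$, followed by simplification via $|S^{m-1}| = 2\pi^{m/2}/\Gamma(m/2)$, collapses the ratio to $\bigl[2^{k-d}|S^k|^d/|S^d|^k\bigr]^{1/(d+1)}$.

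The main obstacle, in my view, will be Stage 1: the rearrangement inequality for $1 < k < d-1$ is not obviously implied by the endpoint cases handled by Baernstein--Loss and Christ and is likely the technical heart of the argument. The identification in Stage 2 requires invoking the correct extra (conformal) symmetry but then follows standard templates, while Stage 3 is routine bookkeeping with Gamma functions.
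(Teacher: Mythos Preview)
Your overall architecture matches the paper's, but you have misidentified where the real difficulty lies and there is a genuine gap in your compactness step.

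On Stage~1: the rearrangement inequality $\|\RR_k f^\ast\|_{d+1}\ge\|\RR_k f\|_{d+1}$ for \emph{all} $1\le k\le d-1$ is not open; it was proved by Christ in his 1984 paper on $k$-plane estimates, and the present paper simply cites it. So this is not the technical heart.

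The real issue is in Stage~2. Your pointwise bound for a radial nonincreasing $L^p$ function is $f(r)\lesssim r^{-d/p}$, not $r^{-(k+1)}$; more importantly, $r^{-d/p}$ is exactly the scale-invariant decay and is \emph{not} in $L^p$, so it cannot serve as a dominating function. Likewise, ``$L^p$-equiintegrability at $\infty$ from the uniform $L^p$ bound'' is false in general: this is precisely the loss-of-compactness phenomenon that dilation invariance creates, and a normalization of one superlevel set does not by itself rule out mass splitting or escaping. The paper's substitute is a concentration lemma that uses the \emph{Lorentz space} bound $\RR_k:L^{p,p+\delta}\to L^{q,q-\delta}$ (interpolated from Christ's $L^{p,q}\to L^q$ estimate): from $\|\RR_k f\|_q\ge A/2$ one gets a uniform lower bound on $\|f\|_{L^{p,\infty}}$, which after a single dilation forces $f\ge\1_{B(0,c)}$. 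Helly then gives a.e.\ convergence to a nonzero limit, and the passage to the limit is done not by dominated convergence but by a Br\'ezis--Lieb / Lieb-type lemma exploiting $p<q$ together with a.e.\ convergence of both $f_n$ and $\RR_k f_n$.

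For the identification, ``conformal covariance'' is the right intuition, but the paper's actual contribution is to exhibit the explicit non-radial symmetry $\Ss f(u,s)=|s|^{-(k+1)}f(u/s,1/s)$, prove directly via Drury's formula that it preserves $\|\RR_k f\|_{d+1}$, and then run the Carlen--Loss competing-symmetries machine with $V=$ rearrangement and this $\Ss$. Your Euler--Lagrange-on-$S^d$ alternative might be made to work, but you would need to supply the conformal pull-back identity for $\RR_k$ with $1<k<d-1$, which is essentially equivalent to constructing $\Ss$. Your Stage~3 computation is correct in outline.
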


The concept of extremizing sequence has not been defined yet. We will say that $f_n$ is an extremizing sequence for the inequality \eqref{map1} if for all $n$, $\| f_n \|_\frac{d+1}{k+1} = 1$ and $\| \RR_k f_n \|_{d+1} \rightarrow A\left(k,d\right)$. Thus the second part of $\left(i\right)$ means that if $f_n$ is an extremizing  nonincreasing, radial functions, then there exists a sequence of real numbers called $\lambda_n$ such that the sequence $x \mapsto \lambda_n^\frac{d\left(k+1\right)}{d+1}f_n\left(\lambda_n x\right)$ admits a subsequence converging in $L^\frac{d+1}{k+1}$.\\

This introduces the main difficulty in $\left(i\right)$. Indeed, the group of invertible, affine maps is a noncompact group of symmetry for \eqref{map1}. Thus if we choose an arbitrary extremizing sequence, then in the most general case it will converge weakly to the null function. We have to overcome this difficulty.\\

Our proof takes its inspiration from three different papers. To prove $\left( i \right)$, we follow Lieb's approach to prove the existence of extremizers for the Hardy-Littlewood-Sobolev inequality, in his famous paper \cite{lieb}. The major difference here is the way to prove that after suitable rescaling, an extremizing sequence converges weakly to a non-zero function. It is not very surprising that Lieb's approach for the Hardy-Littlewood-Sobolev inequality can be used to solve our problem; in fact, it is very similar to the $k$-plane transform inequality. Part $\left(i\right)$ of \ref{extrem} can be seen as a corollary of the following generalized theorem, whose assumptions are also satisfied by the Hardy-Littlewood-Sobolev inequality:
 
\begin{theorem}\label{generali}
Let $m$ be an integer, $\sigma$ a measure on $\R^+$ such that $\sigma \left( \left\{ 0 \right\} \right)=0$. Let $\TT$ be a linear operator satisfying all the below assumptions:
\begin{enumerate}
\item[$\left(i\right)$] $\TT$ maps $L^p\left(\R^+,r^{m-1} dr\right)$ to $L^q\left(\R^+, \sigma\right)$ with $1 < p < q < \infty$ and $\TT$ maps the Lorentz space $L^{p,p+\delta}\left(\R^+,r^{m-1} dr\right)$ to the Lorentz space $L^{q,q-\delta}\left(\R^+,\sigma\right)$ for some $\delta >0$ such that $p+\delta < q-\delta$;
\item[$\left(ii\right)$] $\TT$ satisfies the rearrangement inequality $\| \TT f \|_q \leq \| \TT \left(\fs\right) \|_q$, where $\fs$ denotes the -radial- nonincreasing rearrangement of $f$, with respect to $r^{m-1}dr$;
\item[$\left(iii\right)$] For any nonnegative, nonincreasing function $f$, $\TT f$ is also nonincreasing;
\item[$\left(iv\right)$] The inequality
\begin{equation}\label{mapg}
\| \TT f \|_q \leq A \| f \|_p
\end{equation}
is invariant under the standard action of dilations.
\end{enumerate}
Then the inequality \eqref{mapg} admits nonincreasing, radial extremizers. Moreover, any extremizing sequence of decreasing functions is relatively compact modulo the group of dilations.
\end{theorem}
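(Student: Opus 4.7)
The plan is to adapt Lieb's existence argument for the Hardy--Littlewood--Sobolev inequality \cite{lieb}. Start from an extremizing sequence $(f_n)$ of nonincreasing functions with $\|f_n\|_p=1$ and $\|\TT f_n\|_q\to A$, where $A$ is the operator norm in \eqref{mapg}. The heart of the problem is the noncompactness of the dilation group: without a suitable rescaling the sequence can vanish weakly. The key technical input, supplied by the Lorentz improvement (i), is a uniform lower bound on the weak-$L^p$ norm that enables a canonical normalization.

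I would first extract a lower bound on $\|f_n\|_{p,\infty}$. The continuous embedding $L^p\hookrightarrow L^{p,p+\delta}$ gives $\|f_n\|_{p,p+\delta}\leq C$, hence by (i), $\|\TT f_n\|_{q,q-\delta}\leq C'$; the embedding $L^{q,q-\delta}\hookrightarrow L^q$ conversely yields $\|\TT f_n\|_{q,q-\delta}\geq c\|\TT f_n\|_q\to cA>0$, so $\|f_n\|_{p,p+\delta}\geq c''>0$ uniformly in $n$. Lyapunov's inequality $\|f_n\|_{p,p+\delta}\leq C\|f_n\|_p^{p/(p+\delta)}\|f_n\|_{p,\infty}^{\delta/(p+\delta)}$ then yields $\|f_n\|_{p,\infty}\geq c_0>0$. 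Using the dilation invariance (iv), rescale each $f_n$ by some $\lambda_n$ so that $\tilde f_n(r):=\lambda_n^{m/p}f_n(\lambda_n r)$ satisfies $|\{r:\tilde f_n(r)>1\}|\geq c_0^p/2$ with respect to $r^{m-1}dr$ (this is possible because $t\mapsto t^p\mu_f(t)$ is dilation invariant up to a relabeling of $t$, with supremum $\|f\|_{p,\infty}^p$). Since $\tilde f_n(r)\leq m^{1/p}r^{-m/p}$ uniformly in $n$, Helly's selection theorem produces a subsequence $\tilde f_{n_k}\to f$ pointwise almost everywhere; the normalization forces $f(r)\geq 1$ on a fixed ball of positive measure, so $\alpha:=\|f\|_p^p\in(0,1]$.

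The conclusion follows from a Brezis--Lieb dichotomy. Pointwise convergence plus the uniform $L^p$ bound imply $\tilde f_{n_k}\rightharpoonup f$ weakly in $L^p$, hence $\TT\tilde f_{n_k}\rightharpoonup \TT f$ weakly in $L^q$; combined with monotonicity (iii) and another application of Helly, this yields pointwise convergence $\TT\tilde f_{n_k}\to \TT f$ almost everywhere. Applying Brezis--Lieb to both $\tilde f_{n_k}$ and $\TT\tilde f_{n_k}$, and using $\|\TT g\|_q\leq A\|g\|_p$ on $g=f$ and $g=\tilde f_{n_k}-f$, one obtains $1\leq \alpha^{q/p}+(1-\alpha)^{q/p}$; since $q>p$, strict convexity forces $\alpha\in\{0,1\}$, and nontriviality forces $\alpha=1$. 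A final use of Brezis--Lieb converts pointwise convergence to strong $L^p$ convergence $\tilde f_{n_k}\to f$, and continuity of $\TT$ gives $\|\TT f\|_q=A$, so $f$ is an extremizer and the extremizing sequence is relatively compact modulo dilations. The main obstacle is the Lorentz lower bound step: without hypothesis (i), nothing prevents an extremizing sequence of radial nonincreasing functions from spreading out or collapsing on scales escaping every dilation.
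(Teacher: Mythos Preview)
Your argument is correct and follows essentially the same route as the paper: the Lorentz bound plus the interpolation inequality $\|f\|_{p,p+\delta}^{p+\delta}\le\|f\|_{p,\infty}^{\delta}\|f\|_p^{p}$ gives a uniform weak-$L^p$ lower bound (the paper's Lemma~\ref{lem1}), a dilation normalizes the sequence to dominate $\1_{B(0,c)}$, Helly on the monotone $\tilde f_{n_k}$ and on $\TT\tilde f_{n_k}$ supplies pointwise convergence, and then the Brezis--Lieb dichotomy forces $\alpha=1$. The only cosmetic difference is that the paper packages this last step as an invocation of Lieb's lemma (Lemma~\ref{lemmmma}) rather than spelling out the inequality $1\le\alpha^{q/p}+(1-\alpha)^{q/p}$ as you do.
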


Two assumptions are essential in this theorem.  The continuity in Lorentz spaces will help us prove a concentration compactness lemma, reducing the difficulties generated by the dilation-invariance. The rearrangement inequality $\left( ii \right)$ will generate some additional compactness.\\

As already said, we can also apply this general theorem to the Hardy-Littlewood-Sobolev inequality. It shows that there exist extremizers. Nevertheless this will not give the value of the best constant, which, furthermore, has been known for a long time. But sometimes the single knowledge of existence of extremizers is enough to get their values and the value of the best constant, because extremizers satisfy a certain Euler-Lagrange equation. If the solutions of this equation are known, then the best constant can be computed.\\

To find the best constant in the $k$-plane inequality \eqref{map1} we will use an approach introduced by Carlen and Loss in \cite{compet}, that they call competing symmetries. We will need the existence of an additional symmetry $\Ss$ of \eqref{map1}, that does \textit{not} map radial functions to radial functions. It could be seen as a problem but it is actually a very helpful information. The choice of this symmetry is the generalisation of a symmetry found by Christ in \cite{rdextr} in the special case of the Radon transform. Coming back to the process introduced by Carlen and Loss and using the existence theorem $\left(i\right)$ it will lead to the value of some extremizers and then of the best constant.\\

Nevertheless the approach that they followed led them to \textit{all} the extremizers, using some additional work for the equality case in the rearrangement inequality. This does not work for us, and then we do not prove that the extremizers are unique modulo the invertible affine maps. In the last section, we explain how a theorem that states that any extremizer can be written $f \circ L$ for $f$ radial and $L$ an invertible affine map actually leads to the explicit value of all the extremizers. A theorem like this one has already been proved by Taryn Flock for $k=1$; it follows that in the case of the X-ray transform, all the extremizers are given by \eqref{extremv}.\\
 
For the rest of the paper, let us note the following:
\begin{itemize}
\item  Let $A$ and $B$ be positive functions and $P$ be some statement. We will say that $P$ implies that $A \lesssim B$ when there exists a -large- universal constant $C$, which depends only on the dimension $d$, such that $P$ implies that $A \leq C B$. $A \gtrsim B$ will be the convert and $A \sim B$ will be used when $A \lesssim B$ and $B \lesssim A$.
\item A radial function will be considered all along the paper either as a function on $\R^d$ or as a function of the norm, depending on the context.
\item $|E|$ denotes the Lebesgue measure of a set $E$, except in the case of a sphere.
\item $d\left(0, \Pi\right)$ denotes the euclidean distance between $0$ and $\Pi$ a $k$-plane, that is
\begin{equation*}
d\left(0,\Pi\right) = \disp{\inf_{y \in \Pi} \| y \|}.
\end{equation*}
\item $|S^{m-1}|$ denotes the Lebesgue surface measure of the euclidean sphere of $\R^m$.
\item $e_d$ is the vector $\left(0,...,0,1\right)$.
\item For $x$ a vector on $\R^d$, we will write $x=\left(x',x_d\right)$ with $x' \in \R^{d-1}$ and $x_d\in \R$.
\item $\| f \|_p$ denotes the $L^p$-norm of $f$, with respect to a contextual measure.
\item $\R^+$ is the set $\left(0,\infty\right)$.\\
\end{itemize} 

I am indebted to Michael Christ who showed me this very interesting subject, and who pointed out some useful papers. I am also indebted to Jean-Marc Delort for a partial proofreading of the manuscript.

\section{Preliminaries}

In this section we introduce some -standards- notions which will be useful for what follows. We will talk about the theory of radial, nonincreasing rearrangement of a function and the theory of Lorentz spaces. Grafakos's book \cite{louk}, is surely a more complete introduction.

\subsection*{Radial nonincreasing rearrangement}\label{rea} Let us consider $\mu$ a measure on $\R^d$ and $E$ a measurable subset of $\R^d$. We denote by $\Es$ the unique closed ball centred at the origin such that $\mu\left(E^\ast\right)=\mu\left(E\right)$; now for $f$ a measurable function from $\R^d$ to $\left[0,\infty\right]$, and $t \geq 0$, let us call 
\begin{equation*}
E_f\left(t\right) = \left\{ x \in \R^d, |f\left(x\right)| \geq t  \right\}.
\end{equation*}
Then we have the following proposition:

\begin{proposition}
Let $f$ be a measurable function from $\R^d$ to $\R \cup {\pm \infty}$; there exists a unique function $\fs$, from $\R^d$ to $\left[0,\infty\right]$  such that
\begin{equation}\label{carac}
E_{|f|}\left(t\right)^\ast = E_{\fs}\left(t\right).
\end{equation}
Moreover, $\fs$ is radial, nonincreasing -as a function of the norm. We furthermore have the properties:
\begin{enumerate}
\item[$\left(i\right)$] for all measurable functions $f \in L^p$, with $1 \leq p \leq \infty$, $\| f  \|_p = \|f^\ast \|_p$,
\item[$\left(ii\right)$] for all measurable nonnegative functions $f \in L^p$, $g \in L^p$, with $1 \leq p \leq \infty$, $\| f - g \|_p \leq \|f^\ast - g^\ast \|_p$,
\item[$\left(iii\right)$] for all measurable nonnegative functions $f$, $g$, $f \leq g \Rightarrow f^\ast \leq g^\ast$,
\item[$\left(iv\right)$] for all measurable nonnegative functions $f$, for all $\lambda \geq 0$, $\lambda f = \left(\lambda f\right)^\ast$.
\end{enumerate}
\end{proposition}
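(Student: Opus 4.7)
The starting point is to construct $\fs$ explicitly from the distribution function $\mu_f(t) := |E_{|f|}(t)|$, which is nonincreasing and right-continuous on $[0,\infty)$. I would define
\[
\fs(x) := \inf\{t \geq 0 : \mu_f(t) \leq \omega_d |x|^d\},
\]
where $\omega_d$ denotes the volume of the Euclidean unit ball, so that $\fs$ is radial and nonincreasing by construction. A direct computation using the right-continuity of $\mu_f$ shows that $E_{\fs}(t)$ is precisely the closed ball centered at $0$ of volume $\mu_f(t)$, and this ball is $E_{|f|}(t)^\ast$ by definition of the star operator on sets, which yields (\ref{carac}). Uniqueness follows because any function satisfying (\ref{carac}) has its superlevel sets prescribed; imposing radiality and right-continuity of the radial profile then determines it pointwise.

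Next I would check (i), (iii), (iv) using only the distribution function. For (i) with $p<\infty$, the layer cake formula gives
\[
\|f\|_p^p = p\int_0^\infty t^{p-1}\mu_f(t)\,dt,
\]
and the same identity for $\fs$ with $\mu_{\fs} = \mu_f$ by (\ref{carac}); the case $p=\infty$ follows from $\|f\|_\infty = \inf\{t : \mu_f(t) = 0\}$. Property (iii) is immediate: $f \leq g$ gives $E_{|f|}(t) \subseteq E_{|g|}(t)$, hence $|E_{|f|}(t)| \leq |E_{|g|}(t)|$, and the corresponding balls are nested. Property (iv) reduces to $\mu_{\lambda f}(t) = \mu_f(t/\lambda)$ for $\lambda > 0$, which substituted into the definition of $\fs$ yields $(\lambda f)^\ast = \lambda \fs$.

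The only real content is in (ii), which I read as the nonexpansivity $\|\fs - \gs\|_p \leq \|f - g\|_p$ (the inequality as written in the excerpt appears to have the reversed direction and I would treat that as a typo). The quickest proof in $L^2$ expands
\[
\|f - g\|_2^2 = \|f\|_2^2 + \|g\|_2^2 - 2\int fg,
\]
combines (i) with Hardy--Littlewood's inequality $\int fg \leq \int \fs \gs$, and concludes that $\|f-g\|_2^2 \geq \|\fs - \gs\|_2^2$. For general $1 \leq p \leq \infty$ I would follow the standard Crandall--Tartar argument: establish that for every pair of nonnegative simple functions adapted to a common finite family of level sets, symmetric decreasing rearrangement applied jointly to $(f,g)$ decreases $\int \Phi(f-g)\,dx$ for every convex even $\Phi$ with $\Phi(0)=0$; the $L^p$ bound is then the special case $\Phi(s) = |s|^p$, and the $L^\infty$ case is trivial because rearrangement preserves the essential supremum.

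The main technical obstacle I anticipate is the passage from simple to general measurable functions in (ii): one must approximate $f,g$ by monotone sequences of simple functions whose rearrangements also converge monotonically, and then use (i) together with Fatou's lemma to control the limit. Once this density/approximation step is in place, the rest of the proposition is bookkeeping on the distribution function.
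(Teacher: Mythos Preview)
The paper does not actually prove this proposition: it is stated as standard preliminary material in the section on rearrangements, with a reference to Grafakos's book \cite{louk}, and the text immediately moves on to Lorentz spaces. So there is no ``paper's own proof'' to compare against.

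Your sketch is the standard route and is correct in outline. You also rightly spot that (ii) as printed has the inequality reversed; the intended statement is the nonexpansivity $\|f^\ast - g^\ast\|_p \le \|f-g\|_p$, and similarly (iv) should read $(\lambda f)^\ast = \lambda f^\ast$ rather than $\lambda f = (\lambda f)^\ast$. Both are evident from how the proposition is used later (to show that $V=f\mapsto f^\ast$ is properly contractive). One minor point: in your uniqueness paragraph you write ``imposing radiality and right-continuity of the radial profile then determines it pointwise,'' but the characterization \eqref{carac} already pins down \emph{all} superlevel sets $\{f^\ast \ge t\}$, which determines $f^\ast$ everywhere without any continuity assumption; no extra hypothesis is needed there.
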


Points $\left(i\right)$ to $\left(iv\right)$ show that the nonlinear operator $f \mapsto f^\ast$ is actually a properly contractive operator -see section \ref{s1}. The map $f^\ast$ is called the symmetric rearrangement of $f$ -with respect to the measure $\mu$. 

\subsection*{Lorentz spaces} Let $1 \leq p < \infty$, $1 \leq r \leq \infty$, $\mu$ a measure on a measurable space $X$. We will call $L^{p,r} = L^{p,r}\left(X,\mu\right)$ the Lorentz space of order $\left(p,r\right)$. Let us recall some notions about Lorentz spaces.
Let $f$ be a function from $X$ to $\R$ and $df$ be the distribution function of $f$, defined as
\begin{equation*}
df\left(t\right) = \mu\left(\left\{ x \in X, |f\left(x\right)| \geq t \right\} \right),
\end{equation*}
for $t \geq 0$. Let us then define the quasi-norm on $L^{p,r}$ as
\begin{equation*}
\| f \|_{L^{p,r}} = \left(\disp{\int_0^\infty \left( df\left(t\right)^\frac{1}{p} t \right)^r \dfrac{dt}{t}}\right)^\frac{1}{r}
\end{equation*}
where the integral is as usual changed to a $\sup$ if $r=\infty$:
\begin{equation*}
\| f \|_{L^{p,\infty}} = \disp{\sup_{t>0} df\left(t\right)^\frac{1}{p} t}.
\end{equation*}
The term quasi-norm means that the quantity defined above does not satisfy the triangle inequality, but satisfies instead the following:
\begin{equation*}
\exists C, \forall f,g \in L^{p,r}, \| f+ g \|_{L^{p,r}} \leq C \left( \|f \|_{L^{p,r}} + \| g \|_{L^{p,r}}\right).
\end{equation*}
The spaces $L^{p,r}$ are quasi-complete for the values of $p,r$ described above.\\

The last thing that we need to know about Lorentz spaces is the useful interpolation inequality 
\begin{equation}\label{interpoline}
\| f \|^r_{L^{p,r}} \leq \| f \|^{r-p}_{L^{p,\infty}} \| f \|^p_p
\end{equation}
for $f \in L^p$, $1 \leq p <\infty$. Indeed,
\begin{equation*}
\| f \|^r_{L^{p,r}} = \disp{\int_0^\infty \left(df\left(t\right)^\frac{1}{p} t\right)^r \dfrac{dt}{t} \leq \left( \sup_{t>0} df\left(t\right)t^\frac{1}{p} \right)^{r-p} \int_0^\infty \left(df\left(t\right)^\frac{1}{p} t\right)^p  \dfrac{dt}{t}= \| f \|^{r-p}_{L^{p,\infty}} \| f \|^p_p}.
\end{equation*}

\section{Existence of extremizers}
We have now all the tools to prove part $\left(i\right)$ of \ref{extrem}. To simplify the notations, let us fix $k$ and $d$ and call $q=d+1$, $p=\frac{d+1}{k+1}$, $\RR =\RR _k$, $\GG_k=\GG$, $\sigma_k=\sigma$ and $A=A\left(k,d\right)$. We are then interested in the existence of extremizers for the inequality
\begin{equation}\label{map2}
\| \RR f \|_q \leq A \| f \|_p .
\end{equation}

A naive approach is of course to consider $\left(f_n\right)$ an extremizing sequence for this inequality, meaning $\| f_n \|_p = 1$ and $\| \RR f_n \|_q \rightarrow A$, and to prove that $f_n$ converges strongly. This, as already said, is not possible. Indeed, the inequality \eqref{map2} enjoys a large and non-compact group of symmetries, the invertible affine maps. By that we mean that if $f \in L^p$ and $L$ is an invertible affine map then we have the identity
\begin{equation*}
\dfrac{\| \RR \left(f \circ L\right) \|_q}{\|f \circ L \|_p} = \dfrac{\| \RR f \|_q}{ \| f \|_p}.
\end{equation*}
For a proof, see lemma \ref{affine}. The non-compactness of this group implies in particular that an arbitrary extremizing sequence has no chance to converge -even weakly- in $L^p$ to a non-zero function. We then need to transform an arbitrary extremizing sequence under the action of invertible affine maps to make it converge. This action is defined by
\begin{equation*}
\left(L,f\right) \mapsto \det\left(L\right)^\frac{1}{p} f \circ L
\end{equation*}
and preserves the $L^p$-norm of $f$ \textit{and} the $L^q$-norm of $\RR f$.\\

\subsection*{Some useful facts}The $k$-plane transform satisfies the rearrangement inequality
\begin{equation}\label{inequat1}
\| \RR g \|_q \leq \| \RR \left( g^\ast \right) \|_q.
\end{equation}
Christ proved this in \cite{lorentz}. That way, instead of considering an arbitrary extremizing sequence, we can consider an extremizing sequence of \textit{radial, nonincreasing} functions. It obviously makes the study much easier, passing from functions on $\R^d$ to nonincreasing functions on $\left[0,\infty\right)$. But the group of dilations is still a non-compact group of symmetries for the $k$-plane transform inequality, even restrained to the radial, nonincreasing functions. Thus we still have to deal with the loss of compactness explained above, but since this loss is only due to dilations, it is easier to deal with.\\

In \cite{lorentz}, Christ proved a really useful boundedness theorem for our purpose: the $k$-plane transform maps the Lorentz space $L^{p,q}$ to the Lebesgue space $L^q$. Note that $p<q$ and then we can apply the interpolation theory for Lorentz spaces -see \cite{interpolation} for instance. It shows that $\RR $ is actually continuous from $L^{p,p+\delta}$ to $L^{q,q-\delta}$ for a certain $\delta > 0$, satisfying $p + \delta < q- \delta$. \\

The following lemma explains how strong assumptions on an extremizing sequence would imply its convergence to an extremizer.

\begin{lem}\label{lemmmma}
Let $X$, $Y$ be two measurable spaces and $\TT$ be a bounded linear operator from $L^p\left(X\right)$ to $L^q\left(Y\right)$, with $1 \leq p \leq q < \infty$. Let us consider $g_n$ an extremizing sequence associated to the inequality $\| \TT f \|_q \leq A \| f \|_p$. Let us assume the three following points:
\begin{enumerate}
\item[$\left(a\right)$] $f_n$ converges weakly to a non-zero function $f \in L^p\left(X\right)$,
\item[$\left(b\right)$] $f_n$ converges almost everywhere to $f$,
\item[$\left(c\right)$] $\TT f_n$ converges almost everywhere to $\TT f$. 
\end{enumerate}
Then we have the following conclusions:
\begin{enumerate}
\item[$\left(i\right)$] $f$ is an extremizer for the above inequality.
\item[$\left(ii\right)$] Actually, $f_n$ converges strongly to $f$.
\end{enumerate}
\end{lem}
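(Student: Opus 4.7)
\bigskip
\noindent\emph{Proof proposal.} The natural tool is the Brézis--Lieb lemma, which upgrades a.e.\ convergence plus a uniform norm bound into a precise splitting of the norm. The plan is to apply it twice, once to $f_n$ in $L^p(X)$ and once to $\TT f_n$ in $L^q(Y)$, and then compare via the operator bound and linearity of $\TT$.

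Concretely, since $\| f_n \|_p = 1$ and $f_n \to f$ almost everywhere by (b), Brézis--Lieb gives
$$\| f_n \|_p^p - \| f_n - f \|_p^p \longrightarrow \| f \|_p^p.$$
Passing to a subsequence I may assume $\| f_n - f \|_p \to b$, and setting $a := \| f \|_p > 0$ (which is positive by (a)) this reads $a^p + b^p = 1$. For the image, (c) and the operator bound $\| \TT f_n \|_q \leq A$ let me apply Brézis--Lieb again in $L^q(Y)$; using linearity of $\TT$ to write $\TT f_n - \TT f = \TT(f_n - f)$,
$$A^q = \lim_n \| \TT f_n \|_q^q = \| \TT f \|_q^q + \lim_n \| \TT(f_n - f) \|_q^q.$$
Combining this with $\| \TT f \|_q \leq A a$ and $\| \TT(f_n - f) \|_q \leq A \| f_n - f \|_p$ yields
$$1 \leq a^q + b^q.$$

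Now I would invoke the assumption $p \leq q$ (in fact $p<q$, which is what we have in the $k$-plane setting since $k\geq 1$): because $a, b \in [0,1]$, we have $a^q \leq a^p$ and $b^q \leq b^p$, with equality for each only when the value is $0$ or $1$. But then $1 \leq a^q + b^q \leq a^p + b^p = 1$ forces saturation everywhere, so $a, b \in \{0,1\}$; combined with $a^p + b^p = 1$ and $a > 0$, this gives $a = 1$ and $b = 0$. Thus $f_n \to f$ strongly in $L^p$, which is (ii), and then $\| \TT f \|_q = A = A \| f \|_p$ shows that $f$ is an extremizer, which is (i).

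The only delicate point is the verification of the hypotheses of Brézis--Lieb on the image side: one needs the a.e.\ limit to be precisely $\TT f$ rather than some arbitrary weak-$*$ limit, which is exactly what (c) supplies. Everything else is routine, resting ultimately on the strict concavity of $t \mapsto t^{p/q}$ on $[0,1]$ -- the elementary inequality that forbids mass splitting at the exponent level when $p < q$.
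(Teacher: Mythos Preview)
Your argument is correct and is precisely the approach the paper defers to: the paper does not give its own proof of this lemma but refers the reader to Lieb \cite{lieb}, whose method is exactly the double application of the Br\'ezis--Lieb lemma followed by the elementary comparison $a^q+b^q\le a^p+b^p$ for $a,b\in[0,1]$ that you wrote out. Two minor remarks. First, you are right that the strict inequality $p<q$ is what the argument actually requires (and what holds in the application, since $q=d+1>\frac{d+1}{k+1}=p$); for $p=q$ conclusion (ii) can genuinely fail, so the hypothesis $p\le q$ in the statement is slightly loose. Second, your passage to a subsequence establishes (i) directly, and for (ii) one should note that the same reasoning applied to an arbitrary subsequence forces $b=0$, hence $\|f_n-f\|_p\to 0$ along the full sequence.
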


This lemma is extremely general and its proof is rather simple; the reader can for instance consult \cite{lieb}. Originally, it was used to prove the existence of extremizers for the Hardy-Littlewood-Sobolev inequality. The three assumptions $\left(a\right), \left(b\right), \left(c\right)$ are very strong. Indeed $\left(a\right)$ can seem easy to be satisfied but when we have a non-compact group of symmetries, as in the inequality \eqref{map2}, an arbitrary extremizing sequence probably converges weakly to $0$. Thus we can do nothing without a concentration-compactness lemma. $\left(b\right)$ requires a certain structure about the extremizing sequence $f_n$. $\left(c\right)$ may be the easiest assumption to show -in particular for integral operator- using that $f_n$ converges weakly. Here we are looking at some radial nonincreasing functions, which makes the study far easier. Indeed, we have the following theorem, which is sometimes called Helly's principle:

\begin{thm}
Let $f_n$ be a sequence of decreasing functions on an interval $I \subset \R$, uniformly bounded. Then up to a passage to a subsequence, $f_n$ converges pointwise.
\end{thm}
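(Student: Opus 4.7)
The plan is to use a Cantor diagonal extraction together with the monotonicity of the $f_n$. First, fix a countable dense subset $D = \{x_1, x_2, \ldots\}$ of $I$, for instance $D = I \cap \m{Q}$. Since $(f_n)$ is uniformly bounded, each sequence $(f_n(x_j))_{n \geq 1}$ is a bounded sequence of real numbers, so Bolzano--Weierstrass combined with a standard diagonal argument produces a subsequence $(f_{n_k})$ such that $f_{n_k}(x)$ converges as $k \to \infty$ for every $x \in D$. Denote this limit by $g(x)$; since each $f_{n_k}$ is nonincreasing, $g$ is nonincreasing on $D$.

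Next, I would leverage monotonicity to promote convergence on $D$ to convergence at most points of $I$. For any $x \in I$ and any $y, z \in D$ with $y < x < z$, the monotonicity of the $f_{n_k}$ gives $f_{n_k}(y) \geq f_{n_k}(x) \geq f_{n_k}(z)$, and passing to $\limsup$ and $\liminf$ in $k$ yields
\begin{equation*}
g(y) \geq \limsup_{k \to \infty} f_{n_k}(x) \geq \liminf_{k \to \infty} f_{n_k}(x) \geq g(z).
\end{equation*}
Letting $y \uparrow x$ and $z \downarrow x$ along $D$, this becomes $g(x^-) \geq \limsup_k f_{n_k}(x) \geq \liminf_k f_{n_k}(x) \geq g(x^+)$, where $g(x^\pm)$ denote the one-sided limits of $g$ at $x$; these exist because $g$ is bounded and monotone on the dense set $D$. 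Consequently $f_{n_k}(x)$ converges whenever $g(x^-) = g(x^+)$, that is, at every $x \in I$ outside the at most countable jump set $D'$ of $g$.

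To finish, a second Cantor diagonal extraction applied to the countable set $D'$ yields a further subsequence $(f_{n_{k_\ell}})$ that converges at every $x \in D'$; convergence at points of $I \setminus D'$ is inherited by passing to a subsequence, so this final sequence converges pointwise on all of $I$. The main conceptual step is the monotonicity squeeze sandwiching $\limsup_k f_{n_k}(x)$ and $\liminf_k f_{n_k}(x)$ between $g(x^-)$ and $g(x^+)$; the at most countably many jumps of $g$ are handled painlessly by the second extraction.
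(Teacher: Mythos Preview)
Your argument is correct and follows essentially the same route the paper sketches: a diagonal extraction on a countable dense set (the rationals) to get a monotone limit $g$ there, then the observation that the jump set of $g$ is at most countable, handled by a second diagonal extraction. Your monotonicity squeeze $g(x^-)\ge \limsup_k f_{n_k}(x)\ge \liminf_k f_{n_k}(x)\ge g(x^+)$ is exactly the mechanism implicit in the paper's one-line description, just made explicit.
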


This theorem has been known for a while. The idea is basically to extract convergent sequences for all rational points, which leads to a pointwise limit which is decreasing, defined on the rational numbers. Then since the set of points of discontinuity for this limit is countable, we can extract once more and we get a pointwise limit \textit{everywhere}. It then gives a very important compactness result for our purpose. \\

Let us note $d\mu = r^{d-1}dr$. From now, we will consider that $\TT$ is a linear operator and $\sigma$ is a measure such that $\sigma\left( \left\{ 0 \right\} \right)=0$, satisfying the assumptions below:
\begin{enumerate}
\item[$\left(i\right)$] $\TT$ maps $L^p\left(\R^+,\mu\right)$ to $L^q\left(\R^+,\sigma\right)$ with constant $A$ and $1 < p < q < \infty$ and $\TT$ maps $L^{p,p+\delta}\left(\R^+,\mu\right)$ to $L^{q,q-\delta}\left(\R^+,\sigma\right)$ with constant $B$, for a $\delta >0$ such that $p+\delta < q-\delta$;
\item[$\left(ii\right)$] $\TT$ satisfies the rearrangement inequality $\| \TT f \|_q \leq \| \TT \left( \fs \right) \|_q$, where $f^\ast$ is the nonincreasing -radial- rearrangement of $f$ with respect to $\mu$;
\item[$\left(iii\right)$] For any nonincreasing function $f$, $\TT f$ is also nonincreasing;
\item[$\left(iv\right)$] The $L^p \rightarrow L^q$ boundedness inequality is invariant under the action of dilations;
\end{enumerate}
which places us in the general frame of theorem \ref{generali}.\\

The $k$-plane transform does \textit{not} satisfy these assumptions. But because of the rearrangement inequality \eqref{inequat1}, what we need to do is to look for extremizers for $\RR$ restricted to radial functions. On this subset of $L^p$, $\RR$ is closely related to an operator acting on functions on $\R^+$. The geometric point of view make us introduce $\TT$ the operator defined on continuous, compactly supported functions on $\R^+$ as
\begin{equation*}
\TT f\left(r\right) = \disp{\int_0^\infty f \left(\sqrt{s^2 + r^2} \right) s^{k-1}ds} .
\end{equation*}
Then we have the following:

\begin{lem}
For all $f$ radial, continuous, compactly supported function on $\R^d$, and $\Pi \in \GG$ such that $d\left(0,\Pi\right) = r$,
\begin{equation}\label{RT}
\RR f \left(\Pi\right) = |S^{k-1}| \cdot \TT f \left(r\right).
\end{equation}
\end{lem}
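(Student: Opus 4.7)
The plan is to parametrize the $k$-plane $\Pi$ by choosing its foot of perpendicular from the origin and then use the radial symmetry of $f$ together with polar coordinates in $\R^k$.

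First, I would write $\Pi = y_0 + V$, where $y_0$ is the unique point of $\Pi$ closest to the origin and $V \subset \R^d$ is the $k$-dimensional linear subspace parallel to $\Pi$. By construction $\|y_0\| = d(0, \Pi) = r$ and $y_0 \perp V$. Fix any linear isometry $\phi : \R^k \to V$; then every point of $\Pi$ can be written uniquely as $y_0 + \phi(v)$ for $v \in \R^k$, and the surface measure $d\lambda_\Pi$ pulls back to the Lebesgue measure $dv$ on $\R^k$. Crucially, since $y_0 \perp \phi(v)$, Pythagoras gives
\begin{equation*}
\|y_0 + \phi(v)\|^2 = \|y_0\|^2 + \|\phi(v)\|^2 = r^2 + \|v\|^2.
\end{equation*}

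Next, because $f$ is radial, $f(y_0 + \phi(v)) = f\bigl(\sqrt{r^2 + \|v\|^2}\bigr)$, so
\begin{equation*}
\RR f(\Pi) = \int_{\R^k} f\bigl(\sqrt{r^2 + \|v\|^2}\bigr) \, dv.
\end{equation*}
I would then pass to polar coordinates on $\R^k$, writing $v = s\omega$ with $s \geq 0$ and $\omega \in S^{k-1}$, so that $dv = s^{k-1}\, ds \, d\sigma(\omega)$. Since the integrand is independent of $\omega$, the angular integral factors out to yield $|S^{k-1}|$, and we obtain
\begin{equation*}
\RR f(\Pi) = |S^{k-1}| \int_0^\infty f\bigl(\sqrt{r^2 + s^2}\bigr) s^{k-1} \, ds = |S^{k-1}| \cdot \TT f(r),
\end{equation*}
which is the desired identity.

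There is no serious obstacle here; the statement is essentially a Fubini-type computation once the geometric decomposition $\Pi = y_0 + V$ is in place. The only mild point of care is verifying that the choice of the isometry $\phi$ does not matter (it doesn't, by rotational invariance of Lebesgue measure on $\R^k$), and that continuity and compact support of $f$ make all integrals converge absolutely so that Fubini is unambiguously applicable.
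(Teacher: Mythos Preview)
Your proof is correct and follows essentially the same approach as the paper's own argument. The only cosmetic difference is that the paper applies a global isometry of $\R^d$ to move $\Pi$ to the canonical position $re_d + \bigl(\R^k \times \{0\}^{d-k}\bigr)$ before integrating, whereas you parametrize $\Pi$ in place via its foot of perpendicular $y_0$ and a linear isometry $\phi:\R^k\to V$; both routes reduce immediately to the same polar-coordinate integral.
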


\begin{proof} Let us call $P$ the $k$-plane $\R^k \times \left\{ 0 \right\}^{d-k}$.
Let $\Pi \in \GG$ such that $d\left(0,\Pi \right) = r$, and $\Omega$ an isometry of $\R^d$ such that 
\begin{equation*}
\Omega\Pi = \R^k \times \left\{0\right\} \times ... \times \left\{0\right\} \times \left\{r\right\} = re_d + P.
\end{equation*}
Then we know that for a radial function $f$,
\begin{equation*}
\RR f \left(\Pi\right) = \disp{\int_\Pi f\left(x\right) d\lambda_{\Pi} \left(x\right) = \int_{\Omega \Pi} f\left(\Omega^{-1} x\right) d\lambda_{\Pi}\left(\Omega^{-1}x\right)=\int_P f\left(x\right) d\lambda_P\left(x\right). } 
\end{equation*}
The measure on $P$ is as simple as possible, this the Lebesgue measure on $\R^k$. Thus using polar coordinates $\left(s,\theta\right) \in \R^+ \times S^{k-1}$, we get
\begin{equation*}
\RR f \left(\Pi\right) = \disp{\int_{s=0}^\infty \int_{\theta \in S^{k-1}} f\left(re_d+s\theta\right) d\theta s^{k-1} ds    }.
\end{equation*}
Using that $f$ is radial and that $re_d$ and $s\theta$ are orthogonal, we finally get
\begin{equation*}
\RR f \left(\Pi\right) = \disp{ |S^{k-1}| \int_{s=0}^\infty    f \left(\sqrt{s^2 + r^2} \right) s^{k-1}ds = |S^{k-1}| \cdot \TT f \left(r\right).} 
\end{equation*}\end{proof}

 The equation \eqref{RT} shows that $\TT$ is \textit{almost} the $k$-plane transform. $\TT$ acts on some Lebesgue spaces, that we need to explicit, using this correspondence. Its domain is of course the space $L^p\left(\R^+,r^{d-1} dr\right)$. On the other hand, we have
\begin{equation*}
\disp{\| \RR f \|_q^q = \int_{\GG} |\RR f\left(\Pi\right)|^q d\sigma\left(\Pi\right)} 
 = \disp{|S^{k-1}|^q |S^{d-k-1}| \int_{r=0}^\infty |\TT f\left(r\right)|^q r^{d-k-1} dr},
\end{equation*}
where the last line is obtained thanks to the formula $\left(1.1\right)$ in \cite{conjec}. This shows that $\TT$ maps $L^p\left(\R^+, r^{d-1}dr\right)$ to $L^q\left(\R^+, r^{d-k-1}dr\right)$. Using what is written above about the $k$-plane transform, and the same considerations, $\TT$ satisfy all the assumptions of theorem \ref{generali}. The correspondence formula \eqref{RT} finally shows that the inequality $\| \RR f \|_q \lesssim \| f \|_p$ admits extremizers if and only if the inequality $\| \TT f \|_q \lesssim \| f \|_p$ does so. Thus part $\left(i\right)$ of theorem \ref{extrem} is indeed a particular case of theorem \ref{generali}. At last, and this will be useful in the computation of the best constant, for all radial function $f$,
\begin{equation}\label{RTnorm}
\dfrac{\| \RR f \|_{L^q\left( \GG, d\sigma \right)}}{\| f\|_{L^p\left( \R^d \right)}} = \dfrac{|S^{k-1}| |S^{d-k-1}|^\frac{1}{q}}{|S^{d-1}|^\frac{1}{p}} \cdot \dfrac{\| \TT f \|_{L^q\left( \R^+, r^{d-k-1}dr \right)}}{\| f\|_{L^p\left( \R^+, r^{d-1} dr \right)}}.
\end{equation}

Now we have all the tools we need to prove the existence result.
\subsection*{Concentration-compactness result} As we already said, the main difficulty to be overcome is a compactness default for an arbitrary extremizing sequence. The first thing that we need is a way to concentrate some weight inside a bounded domain. The following lemma, which is a form of concentration-compactness principle, is the main idea for the existence theorem:
 
\begin{lem}\label{lem1} There exists a constant $c$ depending only on $p,q,d,\delta$  -that means, only on the parameters- such that the following is satisfied.
Let $f$ be nonincreasing, with $\|f\|_p = 1$ and $\| \TT f \|_q \geq \frac{A}{2}$. There exists $t_0$ such that if we call $g: x \mapsto t_0 f\left(t_0^{\frac{p}{d}}x\right)$, then $g \geq \1_{B\left(0,c\right)}$, $\|g\|_p = 1$ and $\| \TT g \|_q = \| \TT f \|_q$.
 \end{lem}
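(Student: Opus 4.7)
The plan is to exploit dilation invariance to reduce the entire statement to a single scalar inequality. The map $g(x) = t_0 f(t_0^{p/d} x)$ is precisely the standard dilation action, so by assumption $(iv)$ the equalities $\|g\|_p = \|f\|_p = 1$ and $\|\TT g\|_q = \|\TT f\|_q$ hold for \emph{every} choice of $t_0 > 0$; only the condition $g \geq \1_{B(0,c)}$ actually constrains the choice. Since $g$ inherits the monotonicity of $f$, this condition reduces further to $g(c) \geq 1$. Writing $r_0 = t_0^{p/d} c$, so that $t_0 = (r_0/c)^{d/p}$, the requirement becomes the single inequality
\begin{equation*}
r_0^{d/p} f(r_0) \geq c^{d/p}.
\end{equation*}
So the whole lemma reduces to producing one radius $r_0 > 0$ at which $r^{d/p} f(r)$ is bounded below by a universal constant; $c$ is then forced to be any sufficiently small constant.

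To produce such a point I would pass through the weak-$L^p$ quasi-norm. Combining the continuous embedding $L^{q, q-\delta} \hookrightarrow L^q$ (valid because $q - \delta \leq q$), the $L^{p, p+\delta} \to L^{q, q-\delta}$ bound in assumption $(i)$, and the interpolation inequality \eqref{interpoline} applied with $r = p + \delta > p$, I would chain the estimates
\begin{equation*}
\tfrac{A}{2} \leq \|\TT f\|_q \lesssim \|\TT f\|_{L^{q, q-\delta}} \leq B\,\|f\|_{L^{p, p+\delta}} \leq B\,\|f\|_{L^{p, \infty}}^{\delta/(p+\delta)} \|f\|_p^{p/(p+\delta)}.
\end{equation*}
Since $\|f\|_p = 1$, this forces $\|f\|_{L^{p, \infty}} \geq c_1$ for a positive constant $c_1$ depending only on $A, B, p, \delta$.

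To convert this into the required pointwise statement, I would use that for a nonincreasing $f$ on $(\R^+, r^{d-1} dr)$ every superlevel set $\{f \geq t\}$ is an interval $[0, r_t]$ of mass $r_t^d/d$; a short identification of the two suprema then yields $\|f\|_{L^{p, \infty}} = d^{-1/p} \sup_{r > 0} r^{d/p} f(r)$. The lower bound from the previous step accordingly produces $r_0 > 0$ with $r_0^{d/p} f(r_0) \geq \tfrac{1}{2} d^{1/p} c_1$. Setting $c := \bigl(\tfrac{1}{2} d^{1/p} c_1\bigr)^{p/d}$ (which depends only on $p, q, d, \delta$) and $t_0 := (r_0/c)^{d/p}$ finishes the argument.

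The main obstacle is the Lorentz lower bound on $\|f\|_{L^{p, \infty}}$, which is the only place where the strengthened boundedness in $(i)$ really enters. The strict inequality $p + \delta < q - \delta$ is essential, as it produces the positive exponent $\delta/(p+\delta)$ on the weak norm; without that strict separation the chain of estimates collapses to $\|f\|_p \geq 1$, which is vacuous. In other words, the Lorentz improvement over the bare $L^p \to L^q$ boundedness is precisely what rules out an extremizing sequence whose $L^p$ mass is invisible to every fixed scale, and it is also what lets us locate, in a quantitative way, the scale on which to renormalise.
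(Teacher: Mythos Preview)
Your argument is correct and is essentially the paper's own proof, reorganized: both pass through the Lorentz chain $\|\TT f\|_q \lesssim \|\TT f\|_{L^{q,q-\delta}} \leq B\,\|f\|_{L^{p,p+\delta}} \lesssim \|f\|_{L^{p,\infty}}^{\delta/(p+\delta)}$ to force a weak-$L^p$ lower bound, then use monotonicity of $f$ to locate a single scale and rescale by dilation invariance. One small inaccuracy in your closing commentary: the positive exponent $\delta/(p+\delta)$ on the weak norm comes simply from $\delta>0$, not from the separation $p+\delta<q-\delta$; that separation condition is not actually invoked anywhere in this lemma.
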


\begin{proof}
Let us choose $f \in L^p$ nonincreasing, $\|f\|_p = 1$ such that $\| \TT f \|_q \geq \frac{A}{2}$. Then
\begin{align}
\label{equat1} \frac{A}{2} \leq \| \TT f \|_q \lesssim \| \TT f \|_{L^{q,q-\delta}} & \lesssim B \| f \|_{L^{p,p+\delta}}\\
\label{equat2}                  &\lesssim B \| f \|_{L^{p,\infty}}^\frac{\delta}{p+\delta} \| f \|_p^\frac{p}{\delta+p}.
\end{align}
In \eqref{equat1} we used the injection $L^{q,q-\delta} \hookrightarrow L^q$, and the boundedness of the operator $\TT $ from $L^{p,p+\delta}$ to $L^{q,q-\delta}$, with norm that we called $B$. \eqref{equat2} is a consequence of the interpolation inequality \eqref{interpoline}. It leads to: 
\begin{equation}\label{inequa1}
\| f \|_{L^{p,\infty}} \gtrsim C
\end{equation}
with $C$ depending only on $p,q,d,\delta$. 
We thus know that there exists a real number $s_0=t_0^{-1}>0$ such that $s_0 df\left(s_0\right)^\frac{1}{p} \gtrsim C$. Let us call $g$ the function defined by $g\left(x\right) = t_0 f\left(t_0^\frac{p}{d}x\right)$. Then $g$ remains nonincreasing; $\| g \|_p=1$, $\| \TT g \|_q=\| \TT f \|_q$; and 
\begin{equation*}
\mu\left(\left\{ x, g\left(x\right) \geq 1 \right\}\right) = s_0^p \mu\left(\left\{ x, f\left(x\right) \geq s_0 \right\}\right) \gtrsim C.
\end{equation*}
Using now that $g$ is nonincreasing, there exists $c$ depending only on $p,q,d,\delta$ such that $g \geq \1_{B\left(0,c\right)}$. The other consequences follow from the dilation-invariance of the inequality.
\end{proof}

This lemma removes the difficulties generated by the non-compactness of the dilation group. Indeed if we consider an extremizing sequence of nonincreasing functions then it shows that modulo the dilation group there exists a subsequence that converges weakly to a non-zero function. The crucial point here was to use the boundedness in Lorentz space to concentrate most of the $L^p$-norm of $g$ inside a ball with controlled radius, centred at $0$.

\subsection*{Existence of extremizer} In this section, we will see how the previous lemma closes the existence problem.
\begin{proof}
Let $f_n$ be an extremizing sequence for the inequality $\| \TT f \|_q \leq A \| f \|_p$; we can assume that $\| \TT f_n \|_q \geq \frac{A}{2}$. Let us call $\fns$ the nonincreasing rearrangement of $f_n$. Now using lemma \ref{lem1} and inequality \eqref{inequat1} we know that for each $n$ there exists $M_n$ such that if $g_n\left(x\right) = M_n \fns\left(M_n^\frac{p}{d} x\right)$ then $g_n$ is greater than $ \1_{B\left(0,c\right)}$. Here $c$ does not depend on $n$. Moreover, $g_n$ remains nonincreasing, and its $L^p$-norm is still $1$.\\

Using that our inequality is dilation invariant -assumption $\left( iv \right)$- $g_n$ remains an extremizing sequence of nonincreasing functions. Then up to passage to a subsequence, $g_n$ converges $d\mu$-almost everywhere. Indeed let us abuse notations and consider $g_n$ on $\left(0, +\infty\right)$. Let $\rho > 0$; if $g_n\left(\rho\right)$ was an unbounded sequence, then we would be able to extract a subsequence of $g_n$, called $g_{\Phi\left(n\right)}$, such that $g_{\Phi\left(n\right)}\left(\rho\right)$ converges to infinity. Then using that $g_n$ is decreasing and $\rho$ is positive we cannot have $\| g_n \|_p = 1$. That way $g_n$ is uniformly bounded on $\left[\rho, + \infty\right)$. By Helly's theorem $g_n$ converges on $\left[\rho,\infty\right)$, with an extraction. Doing that for a sequence  $\rho_k > 0$ converging to $0$ we get that $g_n$ converges pointwise on $\R^+$, up to an extraction. Now using that $\mu\left( \left\{ 0 \right\} \right) = 0$, the sequence $g_n$ converges $d\mu$-almost everywhere. The extracted sequence will still be called $g_n$. Let us call $g$ the pointwise limit, which satisfies $g \geq \1_{B\left(0,c\right)}$. Thus $g$ is non-zero. \\

$g$ can also be regarded as the weak limit of $g_n$ in $L^p$ -since $g_n$ is bounded in $L^p$ for a value of $p$ greater than $1$, and since the limit of $g_n$ is unique in the distribution space $D'$. It proves that $g$ lies in $L^p$. So far we have proved points $\left(a\right)$ and $\left(b\right)$ in \ref{lemmmma}. Point $\left(c\right)$ is the same as point $\left(a\right)$: since $g_n$ is a sequence of nonincreasing functions $\TT g_n$ is a sequence of nonincreasing functions. Moreover, if there existed a $\rho > 0$ such that $\TT g_n\left(\rho\right)$ were an unbounded sequence, then for the same reason as above the sequence $\|\TT g_n\|_q$ would be unbounded, which is impossible. Then -up to an extraction- $\TT g_n$ must converge everywhere, except maybe at $0$. Using that $\TT $ is linear, continuous from $L^p$ to $L^q$ we know that $\TT $ is continuous from $L^p$ with its weak topology to $L^q$ with its weak topology, and then the pointwise limit of $\TT g_n$ is $\TT g$.\\

Finally, using the inequality $p \leq q$ we can apply \ref{lemmmma} and we get the part $\left(i\right)$ of theorem \ref{extrem}.
 
\end{proof}

\subsection*{Application to the sharp Hardy-Littlewood-Sobolev inequality.} Here we will apply theorem \ref{generali} to the widely studied inequality:
\begin{equation*}
\| \|x\|^{-\lambda} \ast f \|_q \leq A\left(\lambda,p\right) \| f\|_p
\end{equation*}
with $p, \lambda$ and $q$ related through
\begin{equation*}
\dfrac{1}{p}+\dfrac{\lambda}{d} = 1 + \dfrac{1}{q};
\end{equation*}
\begin{equation*}
1<p<q<\infty;
\end{equation*}
\begin{equation*}
0<\lambda<d.
\end{equation*}
This is the Hardy-Littlewood-Sobolev inequality. It admits extremizers -see \cite{compet}, \cite{lieb}. Our theorem can be directly applied here. Indeed  let $\TT$ be the operator defined as
\begin{equation*}
\TT: f \mapsto g \ast f 
\end{equation*}
where $g$ is the function defined as $g\left(x\right)=\| x \|^{-\lambda}$, $0 < \lambda < d$. It is important to note that $g$ lies in the Lorentz space $L^{\frac{d}{\lambda},\infty}$. Let us check the assumptions of the above theorem:
\begin{enumerate}
\item[$\left(i\right)$] The operator $\TT$ is continuous from $L^p\left(\R^d\right)$ to $L^q\left(\R^d\right)$ with $1<p < q<\infty$ satisfying
\begin{equation*}
\dfrac{1}{p}+\dfrac{\lambda}{d} = 1 + \dfrac{1}{q}.
\end{equation*}
$\TT$ is more generally continuous from $L^{p,r}$ to $L^{q,s}$ for all $s \geq r$ -see O'Neil, \cite{convol}. We can then choose $\delta$ such that $p < r=p+\delta < s=q - \delta < q$.
\item[$\left(ii\right)$] $\TT$ satisfies the Riesz rearrangement inequality $\| \TT \left(f^\ast\right) \|_q \geq \| \TT f \|_q$, since $g$ satisfies $g^\ast = g$.
\item[$\left(iii\right)$] For any nonnegative, nonincreasing, radial function $f$, $\TT f$ is also radial, nonincreasing -see \cite{lieb} for instance.
\item[$\left(iv\right)$] Using that the function $g$ is homogeneous the inequality $\| \TT f \|_q \leq A \| f \|_p$ is dilation-invariant.
\end{enumerate}
Thus we can apply the theorem that we just proved, restraining $\TT$ to radial functions regarded as functions of the norm. It tells us that the Hardy-Littlewood-Sobolev inequality $\| \TT f \|_q \leq A \| f \|_p$ admits extremizers. This is a well known result but we believe that the way to prove it, especially the concentration-compactness lemma, is new. It is important to note that none of the assumptions above were hard to prove.

\section{Best constant and value of extremizers for the $k$-plane inequality} So far we have proved a general existence theorem. Applied to the $k$-plane transform inequality \eqref{map1}, it leads to the existence of extremizers. We will give here the value of some extremizers and of the best constant, which solves the endpoint case of Baernstein and Loss conjecture in \cite{conjec}.\\

 The existence of a large group of symmetries was clearly an obstacle to overcome to prove the existence of the extremizers. We will see in this section that this is no longer an obstacle for the research of the explicit values of extremizers, but rather an aid: we will even look for additional symmetries. \\
 
We start this section by a small lemma that is needed to get the explicit value of extremizers:

\begin{lem}
If $f$ is an extremizer for \eqref{map1} then $f$ does not change its sign.
\end{lem}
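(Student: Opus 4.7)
The plan is to use the triangle inequality in conjunction with the hypothesis that $f$ saturates \eqref{map1}. Pointwise, for every $\Pi \in \GG$,
\begin{equation*}
|\RR f(\Pi)| = \left| \int_\Pi f \, d\lambda_\Pi \right| \leq \int_\Pi |f| \, d\lambda_\Pi = \RR |f|(\Pi),
\end{equation*}
so taking $L^q(\sigma)$-norms and using $\||f|\|_p = \|f\|_p$ yields $\|\RR f\|_q \leq \|\RR |f|\|_q \leq A \|f\|_p$. If $f$ is an extremizer, both inequalities are equalities: first, $|f|$ is also an extremizer; second, $|\RR f(\Pi)| = \RR |f|(\Pi)$ for $\sigma$-a.e. $\Pi$. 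The equality case of the triangle inequality for the integral forces $f$ to keep a constant sign $\lambda_\Pi$-almost everywhere on $\Pi$, for $\sigma$-almost every $\Pi$.

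It thus remains to show that this sliced sign-consistency propagates globally. I would argue by contradiction: assume both $E^+ = \{f > 0\}$ and $E^- = \{f < 0\}$ have positive Lebesgue measure in $\R^d$, and exhibit a $\sigma$-positive family of $k$-planes $\Pi$ on which $\lambda_\Pi(\Pi \cap E^+) > 0$ and $\lambda_\Pi(\Pi \cap E^-) > 0$ simultaneously, contradicting the previous paragraph.

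To construct such a family I would select Lebesgue density points $x_0 \in E^+$ and $y_0 \in E^-$, so that for small $r > 0$ both $|B(x_0, r) \setminus E^+|$ and $|B(y_0, r) \setminus E^-|$ are $o(r^d)$. Using the parameterization $\Pi = P + t$ with $P \in \MM_k$ and $t \in P^\perp$ implicit in \eqref{measure}, I would take $P$ whose direction lies in a small fixed cone about $y_0 - x_0$ (a $\mu_k$-positive set in the Grassmannian since $x_0 \neq y_0$), and $t$ in a ball of radius $r/2$ in $P^\perp$ about the essentially common projection $\pi_{P^\perp}(x_0) \approx \pi_{P^\perp}(y_0)$. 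Every such $\Pi$ passes within distance $r/2$ of both $x_0$ and $y_0$, so $\lambda_\Pi(\Pi \cap B(x_0, r)) \gtrsim r^k$ and $\lambda_\Pi(\Pi \cap B(y_0, r)) \gtrsim r^k$, while the family itself has $\sigma$-measure $\gtrsim r^{d-k}$. Applying Fubini along $\R^d = P \oplus P^\perp$, the average over $t$ in this ball of $\lambda_{P+t}(B(x_0, r) \setminus E^+)$ is bounded by $|B(x_0, r) \setminus E^+| / r^{d-k} = o(r^k)$, and similarly on the $E^-$ side. Choosing $r$ small enough then guarantees that on a $\sigma$-positive subfamily both slices of $E^+$ and $E^-$ retain positive $\lambda_\Pi$-measure, the desired contradiction.

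The main obstacle is rigorously executing this last geometric Fubini step: verifying that the chosen family of $\Pi$ genuinely has $\sigma$-positive measure, and that Lebesgue density in $\R^d$ transfers to the $k$-dimensional slices cut by the selected planes. Both facts follow from the product structure $d\sigma = d\mu_k \otimes d\lambda_{P^\perp}$ coming from \eqref{measure} together with Fubini. The geometric input that makes the construction work is that choosing $P$ nearly parallel to the direction $y_0 - x_0$ collapses the two projections onto $P^\perp$ to essentially the same point, so that a single translation $t$ can bring the plane $P + t$ close to both density points at once.
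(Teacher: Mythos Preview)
Your argument is correct, and the density-point/Fubini construction in the last two paragraphs, while only sketched, can be made rigorous along the lines you indicate. However, your route is substantially longer than the paper's. After establishing (as you do) that $|\RR f| = \RR|f|$ $\sigma$-almost everywhere, the paper bypasses any slice-by-slice or geometric analysis entirely: it compares $f$ directly with $f\1_E$ where $E = \{f \geq 0\}$. Since $f\1_E \geq f$ pointwise one has $\RR(f\1_E) \geq \RR f$, while $|f\1_E| \leq |f|$ gives $\|f\1_E\|_p \leq \|f\|_p$; after reducing to $\RR f \geq 0$ almost everywhere, this forces $f\1_E$ to be an extremizer with $\|f\1_E\|_p = \|f\|_p$, whence $|E^c| = 0$. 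Your approach trades this short norm comparison for a hands-on measure-theoretic construction; the extra work buys nothing in this particular lemma, though the technique of propagating pointwise sign information along a transversal family of $k$-planes via Lebesgue density points is a robust one and useful in settings where no such global comparison function is available.
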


\begin{proof}If $f$ is an extremizer, then using $A=\| \RR f \|_q \leq \| \RR \left(|f|\right) \|_q\leq A$ we deduce that $|\RR f|=\RR \left(|f|\right)$ almost everywhere on $\GG$. Then we can assume that $\RR f \geq 0$ -almost everywhere on $\GG$. Let us call $E=\left\{x, f\left(x\right) \geq 0\right\}$. Then using that $f\1_E \geq f$, $\RR \left(f\1_E\right) \geq \RR f$ and $\|f \1_E \|_p \leq \| f\|_p$, $f\1_E$ must be an extremizer. Using that $f$ is an extremizer too, $\|f \1_E \|_p = \| f\|_p$ and so $|E^c|=0$.\end{proof}

Thus we can consider extremizers that are nonnegative. Here we want to prove the following:

\begin{theorem}\label{sharp} An extremizer for the inequality \eqref{map1} is given by
\begin{equation}\label{extremi}
f\left(x\right) =\left[ \dfrac{1}{1+\| x\|^2}\right]^\frac{k+1}{2}.
\end{equation}
\end{theorem}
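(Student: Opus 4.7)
The approach is the competing symmetries method of Carlen--Loss \cite{compet}, as announced in the introduction. The plan is to construct, in addition to the usual dilations and Euclidean motions leaving \eqref{map1} invariant, an \emph{extra} symmetry $\Ss$ of \eqref{map1} that does not preserve radial symmetry, and then to iterate the map $\mathcal{U}(f) := \bigl(\Ss f\bigr)^\ast$ starting from a radial nonincreasing extremizer (whose existence is Theorem~\ref{extrem}$(i)$). Each iterate remains an extremizer of fixed $L^p$-norm, because $\Ss$ preserves both $\|f\|_p$ and $\|\RR f\|_q$ by construction, while the rearrangement operation preserves the former and can only increase the latter by \eqref{inequat1}. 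The function $h(x) = (1+\|x\|^2)^{-(k+1)/2}$ will be identified as the unique fixed point of $\mathcal{U}$ in each $L^p$-sphere of radial nonincreasing functions, and the compactness modulo dilations from Theorem~\ref{extrem}$(i)$ will then drive the iterates to $h$, making $h$ itself an extremizer.

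To construct $\Ss$, I would transport the problem to $S^d$ via stereographic projection $\Phi : S^d \setminus \{N\} \to \R^d$, whose Jacobian is $\bigl(2/(1+\|x\|^2)\bigr)^d$. This is no accident: $h^p$ is a constant multiple of this Jacobian, so the weighted pullback $L^p(\R^d) \simeq L^p(S^d)$ along $\Phi$ sends $h$ to a constant on the sphere. Under this identification the full orthogonal group $O(d+1)$ acts by $L^p$-isometries, providing symmetries of \eqref{map1} beyond the Euclidean ones, which correspond only to the stabilizer $O(d)$ of the north pole. Following Christ's construction in \cite{rdextr} for the case $k=d-1$, I would take $\Ss$ to arise from the rotation of $S^d$ by angle $\pi$ in the plane spanned by the last two coordinates of the ambient $\R^{d+1}$, conjugated back to $\R^d$ and dressed with the Jacobian weight $|\mathrm{Jac}|^{1/p}$. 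The check that $\Ss$ preserves $\|\RR f\|_q$ reduces to the geometric fact that the resulting conformal map $\Phi^{-1} \circ R \circ \Phi$ sends affine $k$-planes to generalized $k$-spheres of $\R^d$ in a way compatible with the measure $\sigma$.

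With $\Ss$ in hand, iterate $f_{n+1} := \mathcal{U}(f_n)$ from any radial nonincreasing extremizer $f_0$. The $f_n$ are radial nonincreasing extremizers of unit $L^p$-norm, so by Theorem~\ref{extrem}$(i)$ a subsequence of suitable dilates converges in $L^p$ to some $f_\infty$, which must satisfy $\mathcal{U}(f_\infty) = f_\infty$. The conclusion then hinges on the rigidity statement that any nonincreasing radial fixed point of $\mathcal{U}$ is a dilate of $h$. The heuristic is convincing: on the sphere such a fixed point lifts to a function invariant both under the rotation defining $\Ss$ (from $\Ss f_\infty = f_\infty$) and under all of the isotropy $O(d)$ (from radial symmetry, i.e.\ the rearrangement-fixed property); these two subgroups together generate all of $O(d+1)$, so the lift is constant and its pre-image on $\R^d$ is a dilate of $h$.

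The main obstacle I expect is making this rigidity step airtight. Because rearrangement is nonlinear and non-smooth, extracting from the identity $\mathcal{U}(f_\infty) = f_\infty$ the separate conditions $\Ss f_\infty = g$ and $g^\ast = f_\infty$ for a candidate $g$ — and from these the full $O(d+1)$-invariance on $S^d$ — requires a careful measure-theoretic argument analogous to the equality case in the Riesz rearrangement inequality used by Carlen--Loss. Once $h$ is established as an extremizer, part $(iii)$ of Theorem~\ref{extrem} follows from \eqref{RTnorm} by the explicit computation of the beta-type integrals $\int_0^\infty \bigl[\TT h(r)\bigr]^q r^{d-k-1}\, dr$ and $\int_0^\infty h(r)^p r^{d-1}\, dr$, both of which reduce to standard Euler integrals.
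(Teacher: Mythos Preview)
Your overall plan---competing symmetries starting from a radial nonincreasing extremizer---is the paper's strategy, but the construction of $\Ss$ contains a concrete error. You assert that $h^p$ is a constant multiple of the stereographic Jacobian $\bigl(2/(1+\|x\|^2)\bigr)^d$; in fact $h^p=(1+\|x\|^2)^{-(d+1)/2}$, and these agree only when $d=1$. The conformal map obtained by conjugating a sphere rotation through stereographic projection therefore neither fixes $h$ nor is a symmetry of \eqref{map1} for general $k,d$, and your remark that it ``sends affine $k$-planes to generalized $k$-spheres'' already signals trouble, since $\RR$ integrates over $k$-planes, not $k$-spheres. The symmetry Christ found, and which the paper extends, is the \emph{projective} transformation $\Phi(u,s)=(u/s,1/s)$---equivalently, the conjugate of a coordinate swap on $S^d$ through \emph{central} (gnomonic) rather than stereographic projection, whose Jacobian is precisely $(1+\|x\|^2)^{-(d+1)/2}=h^p$. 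With this $\Phi$ the operator $\Ss f(u,s)=|s|^{-(k+1)}f(u/s,1/s)$ is an $L^p$-isometry fixing $h$, and $\Phi$ sends affine $k$-planes to affine $k$-planes; the identity $\|\RR\Ss f\|_q=\|\RR f\|_q$ is then verified by an explicit change of variables inside Drury's formula \eqref{druryf} (Lemmas~\ref{l13}--\ref{l15}), not by a soft geometric argument.

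There is a second gap in your convergence step. You extract a subsequential limit of \emph{dilates} of the iterates and claim it satisfies $\mathcal{U}(f_\infty)=f_\infty$, but $\Ss$ does not intertwine isotropic dilations with isotropic dilations (it conjugates $(u,s)\mapsto(\lambda u,\lambda s)$ to $(u,s)\mapsto(u,s/\lambda)$), so rescaling destroys the recursion $f_{n+1}=\mathcal U(f_n)$ and the fixed-point conclusion does not follow. The paper avoids this by applying the abstract Carlen--Loss result (Theorem~\ref{competing}) directly: the dominated sets $K_N=\{0\le f\le Nh\}$ make $VK_N$ relatively compact without any rescaling, the \emph{full} sequence $(V\Ss)^nf_0$ converges in $L^p$, and the theorem delivers the separate identities $\Ss T=T$ and $VT=T$. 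The rigidity you flag as the main obstacle is then the short computation of Lemma~\ref{fixepoint}, and the difficulty of extracting $\Ss f_\infty=f_\infty$ from $(\Ss f_\infty)^\ast=f_\infty$ never arises.
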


As a matter of fact, since any invertible affine map is a symmetry of the inequality \eqref{map1}, this theorem is equivalent to part $\left(ii\right)$ of theorem \ref{extrem}.\\

Let us explain the process of the proof before the details. Our purpose here is to introduce two operators $V, \Ss$ acting on $L^p$, formally satisfying: $V$ and $\Ss$ preserve the $L^p$-norm of suitable functions, \textit{and}
\begin{equation}\label{ineq132}
\| \RR f \|_q \leq \| \RR \Ss f \|_q; \| \RR f \|_q \leq \| \RR V f \|_q .
\end{equation}
This means that $V$ and $\Ss$ globally increase the functional $f \mapsto \frac{\| \RR f \|_q}{ \|f \|_p}$. Now using additional properties of $\Ss$ and $V$, we will apply a theorem from Carlen and Loss stated in \cite{compet} to show that for any choice of $f \in L^p$, the sequence $\left(V\Ss\right)^n f$ converges to an explicit function $h$. Starting from a function $f$ which is an extremizer, and using \eqref{ineq132}, $h$ must be an extremizer and is explicitly known.\\

In practice, the operator $V$ will be the symmetric rearrangement $f \mapsto f^\ast$, and $\Ss$ will be a symmetry of the inequality. The class of functions whose norm is preserved under the action of $V$ and $\Ss$ will be the nonnegative functions. The operator $\Ss$ is special in a certain sense: it does not preserve the class of radial functions. Thus if we were able to construct an extremizer such that $\Ss h = h$ and $Vh = h$, then the explicit value of $h$ could be determined. A way to construct such an extremizer is described in the next section. But we can already note that an extremizer satisfying this condition must satisfies $\left(V\Ss\right)^n h = h$ for all $n$; this way, considering the sequence $\left(V \Ss\right)^n f$ where $f$ is already an extremizer is probably a good idea.

\subsection*{Competing operators.}\label{s1}

As we said we are following the approach introduced by Carlen and Loss in \cite{compet}. We might as well refer to the book \cite{boook}. In a first time we sum up the general results stated in this book, chapter II, paragraph $3.4$: let $\BB$ be a Banach space of real valued functions, with norm $\| \cdot \|$. Let us consider $\BB^+$ the cone of nonnegative functions; let us assume that $\BB^+$ is closed. Let us introduce some definitions:

\begin{definition}
An operator $A$ on $\BB$ is called properly contractive  provided that
\begin{enumerate}
\item[$\left(i\right)$] $A$ is norm preserving on $\BB^+$, i.e., $\| Af \| = \| f \|$ for all $f \in \BB^+$,
\item[$\left(ii\right)$] $A$ is contractive on $\BB^+$, i.e., for all $f, g \in \BB^+$, $\| Af - Ag \| \leq \| f - g \|$,
\item[$\left(iii\right)$] $A$ is order preserving on $\BB^+$, i.e., for all $f, g \in \BB^+$, $f \leq g \Rightarrow Af \leq Ag$,
\item[$\left(iv\right)$] $A$ is homogeneous of degree one on $\BB^+$, i.e., for all $f \in \BB^+, \lambda \geq 0$, $A\left(\lambda f\right) = \lambda Af$.
\end{enumerate}
\end{definition}

Note that we do not need $A$ to be linear. Some examples of such operators are for instance the radial nonincreasing rearrangement $f \mapsto f^\ast$ or any linear isometry on $\BB$.

\begin{definition}\label{def11}
Given a pair of properly contractive operators $\Ss$ and $V$, it is said that $\Ss$ competes with $V$ if for $f \in \BB^+$,
\begin{equation*}
f \in R\left(V\right) \cap \Ss R\left(V\right) \Rightarrow \Ss f = f.
\end{equation*}
Here $R$ denotes the range.
\end{definition}

\begin{theorem}\label{competing}
Suppose that $\Ss$ and $V$ are both properly contractive, that $V^2=V$ and that $\Ss$ competes with $V$. Suppose further that there is a dense set $\BBt \subset \BB^+$ and sets $K_N$ satisfying $\cup_N K_N = \BBt$ and for all integer $N$, $\Ss K_N \subset K_N $, $VK_N \subset K_N$, and $VK_N$ relatively compact in $\BB$. Finally suppose that there exists a function $h \in \BB^+$ with $\Ss h = Vh = h$ and such that for all $f \in \BB^+$,
\begin{equation}\label{implic11}
\| Vf -h \| = \| f -h \| \Rightarrow Vf = f.
\end{equation}
Then for any $f \in \BB^+$, 
\begin{equation*}
\disp{Tf \equiv \lim_{n \rightarrow \infty} \left(V \Ss\right)^n f}
\end{equation*}
exists. Moreover, $\Ss T = T$ and $VT = T$.
\end{theorem}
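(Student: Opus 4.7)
The plan is to execute the classical competing-symmetries iteration. I will first work on the dense subset $\BBt$, where relative compactness of $VK_N$ guarantees subsequential limits of $a_n := (V\Ss)^n f$; the key is to show that any such limit $g$ produces a common fixed point $\Ss g$ of $V$ and $\Ss$, which in turn forces the whole sequence to converge. I then extend to $\BB^+$ by contractivity and density.

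For $f \in K_N$, the iterates $a_n$ all lie in $K_N$ because $\Ss K_N, VK_N \subset K_N$, and for $n \geq 1$ they additionally lie in $VK_N$, which is relatively compact, so a subsequence $a_{n_k}$ converges to some $g \in \BB$. Because $Vh = \Ss h = h$ and both operators are contractions on $\BB^+$, the scalar sequence $\|a_n - h\|$ is non-increasing, so $\|a_n - h\| \downarrow \ell$ for some $\ell \geq 0$. In particular $\|g - h\| = \|V\Ss g - h\| = \ell$, and equality must hold throughout
\begin{equation*}
\ell = \|V(\Ss g) - Vh\| \leq \|\Ss g - h\| = \|\Ss g - \Ss h\| \leq \|g - h\| = \ell.
\end{equation*}
The first equality combined with hypothesis \eqref{implic11} applied to $\Ss g$ yields $V(\Ss g) = \Ss g$, so $\Ss g \in R(V)$. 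Moreover $a_n \in R(V)$ for $n \geq 1$, whence $Va_n = a_n$ by $V^2 = V$; continuity of $V$ then gives $Vg = g$, so $g \in R(V)$ and consequently $\Ss g \in \Ss R(V)$. Thus $\Ss g \in R(V) \cap \Ss R(V)$, and the competing hypothesis forces $\Ss(\Ss g) = \Ss g$. Setting $g_1 := \Ss g$, this element is a common fixed point of $V$ and $\Ss$, hence of $V\Ss$.

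To promote the subsequence to full convergence, observe that $a_{n_k+1} = V\Ss a_{n_k} \to V\Ss g = \Ss g = g_1$, so along the shifted subsequence $a_n \to g_1$. Since $g_1$ is a fixed point of the contraction $V\Ss$, the sequence $\|a_n - g_1\|$ is non-increasing, and a subsequence tending to $0$ forces the full sequence to tend to $0$. Hence $a_n \to g_1 =: Tf$ on $K_N$, with $\Ss Tf = VTf = Tf$ by construction. For an arbitrary $f \in \BB^+$, I approximate by $f_\varepsilon \in \BBt$; contractivity gives $\|(V\Ss)^n f - (V\Ss)^n f_\varepsilon\| \leq \|f - f_\varepsilon\|$ for every $n$, so the Cauchy property of $(V\Ss)^n f_\varepsilon$ passes to $(V\Ss)^n f$, and the fixed-point identities survive the limit by continuity of $V$ and $\Ss$.

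The main obstacle I expect is in the middle step: extracting simultaneously the three fixed-point identities $Vg = g$, $V(\Ss g) = \Ss g$, and $\Ss(\Ss g) = \Ss g$ from the equality chain. The first uses $V^2 = V$ together with continuity of $V$ to upgrade ``$g$ is a limit of elements of $R(V)$'' to ``$g \in R(V)$''; the second is exactly \eqref{implic11}; the third is where the competing hypothesis bites, and it needs both $g \in R(V)$ and $\Ss g \in R(V)$ already in hand. Once these three are in place, the monotonicity of $\|a_n - g_1\|$ promotes one subsequential limit to full convergence, and the density plus contractivity argument finishes the extension to $\BB^+$.
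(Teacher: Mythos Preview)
The paper does not give its own proof of this theorem; it is quoted as a black box from Carlen and Loss \cite{compet}, \cite{boook}. Your argument is essentially the classical Carlen--Loss iteration and is correct: the monotone quantity $\|a_n-h\|$ pins down the limit value, hypothesis \eqref{implic11} forces $V(\Ss g)=\Ss g$, the idempotence $V^2=V$ plus continuity gives $Vg=g$, and the competing condition then fixes $\Ss g$ under $\Ss$; the contractivity of $V\Ss$ against the fixed point $g_1$ upgrades subsequential to full convergence, and density plus contractivity handles general $f\in\BB^+$. One small point worth making explicit is that $\BB^+$ is preserved by $V$ and $\Ss$ (this follows from order preservation together with $A(0)=0$, a consequence of degree-one homogeneity), so that all limits and intermediate elements, in particular $g$ and $\Ss g$, remain in $\BB^+$ where the contractivity and the competing hypothesis apply.
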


\subsection*{An additional symmetry.}

Now we come back to the work of Christ. Using correspondence between a convolution operator that he studied in the three papers \cite{qe christ}, \cite{extr christ}, \cite{smooth christ} he proved in \cite{rdextr} the existence of an additional symmetry for the Radon transform inequality, which is the case $k=d-1$. It is defined as:
\begin{equation*}
\II f \left(u,s\right) = \dfrac{1}{|s|^d} f \left( \dfrac{u}{s} , \dfrac{1}{s} \right).
\end{equation*}
It satisfies then $\| \II f \|_{\frac{d+1}{d}} = \| f \|_\frac{d+1}{d}$ and $\| \RR_{d-1} \II f \|_{d+1} = \| \RR_{d-1} f \|_{d+1}$. Fortunately it happens that this symmetry, slightly modified, is working for the $L^p \rightarrow L^q$ inequality related to the $k$-plane transform.

\begin{lem}\label{l13}
Let $\Ss$ be the operator defined as
\begin{equation*}
\Ss f \left(u,s\right) = \dfrac{1}{|s|^{k+1}} f \left( \dfrac{u}{s} , \dfrac{1}{s} \right)
\end{equation*}
where $\left(u,s\right) \in \R^{d-1} \times \left( \R-\left\{0\right\} \right)$. Then $\Ss$ is an isometry of $L^p$ and satisfies the identity:
\begin{equation}\label{equal11}
\| \RR \Ss f \|_q = \| \RR f \|_q ,
\end{equation}
for any nonnegative function $f$.
\end{lem}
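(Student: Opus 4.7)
My plan is to treat the two claims separately: the $L^p$-isometry follows from a direct change of variables, while the identity $\|\RR\Ss f\|_q = \|\RR f\|_q$ requires interpreting the underlying point transformation as a projective symmetry of $\R^d$.

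For the isometry, I expand
\begin{equation*}
\|\Ss f\|_p^p = \int_{\R^{d-1}\times\R} |s|^{-(k+1)p}\,|f(u/s,1/s)|^p\,du\,ds
\end{equation*}
and apply the substitution $(u,s)\mapsto(v,t):=(u/s,1/s)$, whose Jacobian is $|s|^{-(d+1)}$. Since the defining choice $p=\frac{d+1}{k+1}$ gives $(k+1)p=d+1$, the weight $|s|^{-(k+1)p}$ and the Jacobian cancel exactly, yielding $\|\Ss f\|_p = \|f\|_p$.

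The harder part is the identity for $\RR$. The key observation is that the point map $\Phi(u,s):=(u/s,1/s)$ is the affine-chart expression of the projective involution of $\R\mathbb{P}^d$ which swaps the last two homogeneous coordinates. Being projective, $\Phi$ sends (almost every) $k$-plane of $\R^d$ to a $k$-plane, so it induces an a.e.-defined involution $\tilde\Phi:\GG\to\GG$. Starting from
\begin{equation*}
\RR(\Ss f)(\Pi)=\int_\Pi |x_d|^{-(k+1)}\,f(\Phi(x))\,d\lambda_\Pi(x),
\end{equation*}
I will change variables $y=\Phi(x)$ from $\Pi$ to $\Pi':=\tilde\Phi(\Pi)$. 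Using the block form of the differential $d\Phi_x$, I expect the Jacobian of the restriction $\Phi|_\Pi$ to combine with the weight $|x_d|^{-(k+1)}$ into a factor $W(\Pi)$ depending only on $\Pi$; a check in the test case $k=1$, $\Pi=c+\R e_d$ with $c\in\R^{d-1}\times\{0\}$, confirms this, producing the factor $W(\Pi)=(1+|c|^2)^{-1/2}$. This yields the pointwise identity $\RR(\Ss f)(\Pi)=W(\Pi)\,\RR f(\tilde\Phi(\Pi))$.

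To conclude, I raise to the $q$-th power, integrate against $\sigma$, and change variables $\Pi\mapsto\tilde\Phi(\Pi)$ on $\GG$. The involution property $\Ss^2=\mathrm{Id}$ forces $W(\Pi)W(\tilde\Phi\Pi)=1$, so the equality $\|\RR\Ss f\|_q = \|\RR f\|_q$ reduces to the transformation law $\tilde\Phi_*\sigma = W^q \sigma$. This last statement is the main technical obstacle, as $\tilde\Phi$ does not preserve the fibration of $\GG$ over $\MM_k$; the cleanest route is likely to pass through the cone construction $\R^d\hookrightarrow\R^{d+1}$, where $k$-planes of $\R^d$ correspond to $(k+1)$-dimensional linear subspaces of $\R^{d+1}$, interpret $\Phi$ as a coordinate swap in $\R^{d+1}$, and deduce the transformation law from the $O(d+1)$-invariance of the Haar measure on the relevant Grassmannian together with an explicit identification of $\sigma$ with this invariant measure weighted by an appropriate power of the last homogeneous coordinate.
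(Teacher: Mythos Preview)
Your treatment of the $L^p$-isometry is correct and matches the paper's argument exactly.

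For the identity $\|\RR\Ss f\|_q = \|\RR f\|_q$, your projective-geometry outline is structurally sound---the map $\Phi$ is indeed a projective involution sending $k$-planes to $k$-planes, and a pointwise relation $\RR(\Ss f)(\Pi)=W(\Pi)\,\RR f(\tilde\Phi\Pi)$ does hold. But your proposal leaves two genuine gaps. First, the pointwise identity is only checked in a single test case ($k=1$, a line parallel to $e_d$); you have not actually computed the Jacobian of $\Phi|_\Pi$ for a general $k$-plane and shown that the combination with $|x_d|^{-(k+1)}$ collapses to a function of $\Pi$ alone. Second, and more seriously, the transformation law $\tilde\Phi_*\sigma = W^q\sigma$ is asserted but not proved. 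Your proposed route through the cone in $\R^{d+1}$ and Haar measure on the Grassmannian is plausible in spirit, yet making it precise---identifying $\sigma$ with a weighted pushforward of the invariant measure and tracking that weight under the coordinate swap---is a nontrivial computation that you have not carried out, and it is exactly where the content of the lemma lies.

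The paper bypasses both difficulties by using Drury's formula
\[
\|\RR f\|_q^q = \int_{(\R^d)^{k+1}} f(x_0)\cdots f(x_k)\,\bigl(\RRt f(x_0,\ldots,x_k)\bigr)^{d-k}\,dx_0\cdots dx_k,
\]
where $\RRt f(x_0,\ldots,x_k)=\int_{\R^k} f\bigl(x_0+\sum_i\lambda_i(x_i-x_0)\bigr)\,d\lambda$. One first proves, by an explicit change of variables inside the $\lambda$-integral, the clean pointwise identity
\[
(\RRt\Ss f)(x_0,\ldots,x_k)=\frac{(\RRt f)(\Phi(x_0),\ldots,\Phi(x_k))}{|x_{0d}\cdots x_{kd}|}.
\]
Substituting this into Drury's formula, the factors $|x_{id}|^{-(k+1)}$ from $\Ss f(x_i)$ and the $|x_{id}|^{-(d-k)}$ from the $(d-k)$-th power combine to give exactly $\prod_i |x_{id}|^{-(d+1)}$, which is the product of the Jacobians of $\Phi$. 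The product change of variables $z_i=\Phi(x_i)$ on $(\R^d)^{k+1}$ then yields the result immediately. The virtue of this approach is that it never touches the measure $\sigma$ on $\GG$: everything happens on Euclidean space, where the Jacobians are elementary, and the measure-theoretic identity you are trying to establish is effectively \emph{encoded} in Drury's formula rather than proved separately.
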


\begin{proof} Let us check first that $\Ss$ is an isometry of $L^p$. Let us call 
\begin{equation*} \Phi\left(x\right) = \left( \dfrac{x'}{x_d}, \dfrac{1}{x_d} \right) \end{equation*} 
for $x=\left(x', x_d\right) \in \R^{d-1} \times \left(\R-\left\{ 0 \right\}\right)$. Then its Jacobian determinant is
\begin{equation*}
J\Phi\left(x\right) = \dfrac{1}{|x_d|^{d+1}},
\end{equation*}
which shows that $\| \Ss f \|_p = \| f \|_p$.
 Then we just have to prove the equality \eqref{equal11}. The proof is nothing more than calculation. Let us introduce a bunch of notations before we begin:
\begin{equation*} \Pi\left(x_0, ..., x_k\right) \end{equation*}
denotes the unique $k$-plane containing the linearly independent points $x_0, ..., x_k \in \R^d \times ... \times \R^d$; Let us define $\RRt f$ as
\begin{equation*} \disp{\RRt f\left(x_0, ..., x_k \right) = \int_{\R^k} f\left(x_0 + \lambda_1 \left(x_1-x_0\right) + ... + \lambda_k \left(x_k-x_0\right)\right) d\lambda_1 ... d\lambda_k}. \end{equation*}
Thus we have the correspondence
\begin{equation}\label{correspR}
V\left(x_0, ..., x_k\right) \cdot \RRt f\left(x_0, ..., x_k \right) = \RR f \left(\Pi \left(x_0, ..., x_k\right)\right)
\end{equation}
where $V\left(x_0, ..., x_k\right)$ is the volume of the $k$-simplex $\left(x_0, ..., x_k\right)$.
We want in a first time to prove the following pointwise estimate:

\begin{lem}\label{l14} For all $f \in C^\infty_0$, for all $x_0, ..., x_k \in \R^d \times ... \times \R^d$, linearly independent and such that $\Phi\left(x_0\right), ..., \Phi\left(x_k\right)$ exist and are linearly independent,
\begin{equation*}
\left(\RRt \Ss f\right) \left(x_0, ..., x_k\right) = \dfrac{\left(\RRt f\right) \left(\Phi\left(x_0\right), ..., \Phi\left(x_k\right)\right)}{|x_{0d} \cdot ... \cdot x_{kd}|} .
\end{equation*}
\end{lem}

\begin{proof} There might be a simpler proof but we can only offer some calculus to state this identity. Let us call $\alpha = \left[x_0 + \lambda_1 \left(x_1-x_0\right) + ... + \lambda_k \left(x_k-x_0\right)\right]_d$ -which implicitly depends on $\lambda_1, ..., \lambda_k$- and $\lambda = \left(\lambda_1, ..., \lambda_k\right) \in \R^k$. Thus
 \begin{equation}\label{equat55}
 \left(\RRt \Ss f\right) \left(x_0, ..., x_k\right) = \disp{ \int_{\R^k} \dfrac{1}{|\alpha|^{k+1}} f\left( \dfrac{x'_0 + \lambda_1 \left(x'_1-x'_0\right) + ... + \lambda_k \left(x'_k-x'_0\right)+e_d} {\alpha} \right)  d\lambda }.
 \end{equation}
 Let us make the change of variable
 \begin{equation}\label{changevar}
\lambda'_1 = \alpha^{-1}\lambda_1; ...; \lambda'_{k-1} = \alpha^{-1}\lambda_{k-1}; \lambda'_k = \alpha^{-1}. 
 \end{equation}
Then
\begin{equation*}
d\lambda' =  \dfrac{|\left[x_k-x_0\right]_d|}{|\alpha|^{k+1}} d\lambda
\end{equation*}
and \eqref{equat55} becomes 
 $$\left(\RRt \Ss f\right) \left(x_0, ..., x_k\right) =$$
 $$\disp{ \int_{\R^k} f \left( y_k + \lambda'_k \left( x'_0 + e_d -x_{0d} y_k \right) + \sum_{i=1}^{k-1}\lambda'_i \left( x'_i-x'_0 - \left[x_i-x_0\right]_d y_k \right) \right) \dfrac{d\lambda'}{|\left[x_k-x_0\right]_d|}}$$
 where
\begin{equation*}
y_i = \dfrac{x'_i-x'_0}{\left[x_i-x_0\right]_d}.
\end{equation*}
This formula is somehow important: it shows that we are still integrating $f$ over a $k$-plane. Which one? When we were computing $\RRt \Ss f\left(x_0, ..., x_k\right)$, we were interested only by the values of $f$ on $\Phi\left(\Pi\left(x_0, ..., x_k\right)\right)$. That way it is simple to guess that $\RRt \Ss f\left(x_0, ..., x_k\right)$ is closely related to $\Pi\left(\Phi\left(x_0\right), ..., \Phi\left(x_k\right)\right)$. And indeed, we just have to check that any of the points $x_j$ can be written
\begin{equation}\label{changevar2}
x_j =y_k + \lambda'_k \left(x'_0 + e_d - y_k\right) + \sum_{i=0}^{k-1}\lambda'_i \left( x'_i-x'_0 - \left[x_i-x_0\right]_dy_k \right)
\end{equation}
for suitable choice of $\lambda'$. Taking $\lambda=e_j$ and $\lambda'$ given by \eqref{changevar} for this choice of $\lambda$, we get the equality \eqref{changevar2}. Let us now make the other change of variables:
\begin{equation*}
\lambda'_1 = \dfrac{\lambda_1}{\left[x_1-x_0\right]_d}, ..., \lambda'_{k-1} = \dfrac{\lambda_{k-1}}{\left[x_{k-1}-x_0\right]_d}, \lambda'_k =\dfrac{\lambda_k}{x_{0d}}.
\end{equation*}
We finally get:
$$\left(\RRt \Ss f\right) \left(x_0, ..., x_k\right) =$$
$$\disp{ \int_{\R^k} f \left( y'_k + \lambda_k \left( \Phi\left(x_0\right)-y'_k \right) + \sum_{i=1}^{k-1}\lambda_i\left( y'_i - y'_k\right) \right) \dfrac{d\lambda}{|x_{0d}|\prod_{i=1}^{k-1} |\left[x_i-x_0\right]_d|}}.$$
Let us come back to the correspondence between $\RR$ and $\RRt$, \eqref{correspR}. Since we want to find a relation between $\left(\RRt \Ss f\right) \left(x_0, ..., x_k\right)$ and $\left(\RRt f\right) \left(\Phi\left(x_0\right), ..., \Phi\left(x_k\right)\right) $, the above algebra tells us that it is equivalent to find a relation between the two following volumes:
\begin{equation*}
 V\left(  \Phi\left(x_0\right), y_1, ..., y_k\right); V\left(\Phi\left(x_0\right), \Phi\left(x_1\right), ..., \Phi\left(x_k\right)\right).
\end{equation*}

\begin{lem}\label{l15}
$V\left(  \Phi\left(x_0\right), y_1, ..., y_k\right)$ and $ V\left(\Phi\left(x_0\right), \Phi\left(x_1\right), ..., \Phi\left(x_k\right)\right)$ are related through
\begin{equation*}
\disp{\dfrac{V\left(\Phi\left(x_0\right), \Phi\left(x_1\right), ..., \Phi\left(x_k\right)\right)}{V\left(  \Phi\left(x_0\right), y_1, ..., y_k\right)} = \prod_{i=1}^k \left| \dfrac{x_{0d}}{x_{id}} -1 \right| }.
\end{equation*}
\end{lem}
\begin{proof}
This is an easy calculation. With a direct calculus,
\begin{equation*}
\dfrac{x_{id}}{\left[x_i-x_0\right]_d} \left[\Phi\left(x_i\right) - \Phi\left(x_0\right)\right] = \dfrac{x_{0d}x'_i+x_{0d}e_d-x_{id} x'_0 - x_{id}e_d}{x_{0d} \left[x_i-x_0\right]_d}
\end{equation*}
and on the other hand,
\begin{equation*}
y_i - \Phi\left(x_0\right)= \dfrac{x_{0d}x'_i+x_{0d}e_d-x_{id} x'_0 - x_{id}e_d}{x_{0d} \left[x_i-x_0\right]_d}.
\end{equation*}
It proves the equality
\begin{equation*}
 \Phi\left(x_i\right) - \Phi\left(x_0\right) = \left( 1- \dfrac{x_{0d}}{x_{id}}\right) \left[y_i - \Phi\left(x_0\right)\right].
\end{equation*}
Thus using that
\begin{equation*}
V\left(\Phi\left(x_0\right), \Phi\left(x_1\right), ..., \Phi\left(x_k\right)\right) = V\left(0,\Phi\left(x_1\right)-\Phi\left(x_0\right), ..., \Phi\left(x_k\right)-\Phi\left(x_0\right)\right)
\end{equation*}
the lemma \ref{l15} is proved.
\end{proof}

Let us come back to the proof of lemma \ref{l14}. Using the correspondence described in \eqref{correspR} and the previous lemma we finally get the equality
\begin{equation*}
\left(\RRt \Ss f\right) \left(x_0, ..., x_k\right) = \dfrac{\left(\RRt f\right) \left(\Phi\left(x_0\right), ..., \Phi\left(x_k\right)\right)}{|x_{0d} \cdot ... \cdot x_{kd}|} .
\end{equation*}\end{proof}

At last, let us come back to the proof of lemma \ref{l13}. Since the set of bad points $x_0, ..., x_k$ -we mean points which do not satisfy the natural assumptions of \ref{l14}- has null Lebesgue measure in $\left(\R^d\right)^k$ we do not consider them. Let us use Drury's formula, proved in \cite{drury}:
\begin{equation}\label{druryf}
\| \RR f \|_q^q = \disp{ \int_{\left(\R^d\right)^k} dx_0 ... dx_k f\left(x_0\right) \cdot ... \cdot f\left(x_k\right) \cdot \RRt f \left(x_0, ..., x_k\right)^{d-k} }. 
\end{equation}
Now everything that remains to be done is an easy change of variable $z_i = \Phi\left(x_i\right)$. Indeed, 
\begin{align*}
\| \RR \Ss f \|_q^q & = \disp{\int_{\left(\R^d\right)^k} dx_0 ... dx_k \dfrac{1}{|x_{0d}|^{k+1}}f\left(\Phi\left(x_0\right)\right) \cdot ... \cdot \dfrac{1}{|x_{kd}|^{k+1}}f\left(\Phi\left(x_k\right)\right) \cdot \left(\RRt \Ss f \left(x_0, ..., x_k\right)\right)^{d-k} }\\
                    & = \disp{\int_{\left(\R^d\right)^k} dx_0 ... dx_k \dfrac{1}{|x_{0d}|^{d+1}}f\left(\Phi\left(x_0\right)\right) \cdot ... \cdot \dfrac{1}{|x_{kd}|^{d+1}}f\left(\Phi\left(x_k\right)\right) \cdot \left(\RRt f \left(\Phi\left(x_0\right), ..., \Phi\left(x_k\right)\right)\right)^{d-k} } \\
                    & = \disp{\int_{\left(\R^d\right)^k} dz_0 ... dz_k  f\left(z_0\right) \cdot ... \cdot  f\left(z_k\right) \cdot \RRt f \left(z_0, ..., z_k\right)^{d-k} } \\
                    & = \| \RR f \|_q^q .
\end{align*} \end{proof}

Since we introduced a lot of material, it is convenient to prove only now what we have claimed all along the paper:

\begin{lem}\label{affine}
Let $f$ lie in $L^p$ and $L$ be an invertible affine map, then
\begin{equation*}
\dfrac{\| \RR \left(f \circ L\right) \|_q}{\|f \circ L \|_p} = \dfrac{\| \RR f \|_q}{ \| f \|_p}.
\end{equation*}
\end{lem}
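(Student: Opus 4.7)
The plan is to decompose $L$ into a translation and an invertible linear map, and then handle each piece separately. Write $Lx = Mx + b$ with $M$ invertible linear and $b \in \R^d$. Since
\[
(f \circ L)(x) = (f \circ T_b)(Mx), \qquad T_b(x) = x+b,
\]
it suffices to verify the claim when $L = T_b$ and when $L = M$.

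For the translation $L = T_b$ the Lebesgue measure on $\R^d$ gives $\|f \circ T_b\|_p = \|f\|_p$ immediately. On the $k$-plane side, $\RR(f \circ T_b)(\Pi) = \int_\Pi f(x+b)\, d\lambda_\Pi(x) = \RR f(\Pi + b)$, and because the measure $\sigma$ defined in \eqref{measure} is translation invariant we obtain $\|\RR(f\circ T_b)\|_q = \|\RR f\|_q$. So the ratio is unchanged.

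For the linear case $L = M$, the ordinary change of variables yields $\|f\circ M\|_p = |\det M|^{-1/p} \|f\|_p$. For the numerator, the clean tool is Drury's identity \eqref{druryf}. Noting that the auxiliary operator $\RRt$ satisfies $\RRt(f\circ M)(x_0,\ldots,x_k) = \RRt f(Mx_0,\ldots,Mx_k)$ (which follows directly from its definition as an integral over $\R^k$ in the parameters $\lambda_i$), plug this into \eqref{druryf} and change variables $y_i = Mx_i$ in each of the $k+1$ factors. This produces
\[
\|\RR(f\circ M)\|_q^{q} = |\det M|^{-(k+1)} \|\RR f\|_q^{q},
\]
so $\|\RR(f\circ M)\|_q = |\det M|^{-(k+1)/q}\|\RR f\|_q$. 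Since $p = \tfrac{d+1}{k+1}$ and $q=d+1$ give $\tfrac{k+1}{q} = \tfrac{1}{p}$, the scaling factors in numerator and denominator coincide and cancel in the ratio.

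Combining the two steps proves the lemma. No real obstacle arises: the one point to be slightly careful about is that a linear change of variables distorts surface measure on an individual $k$-plane in a way that depends on the plane, so attacking $\RR(f\circ M)(\Pi)$ directly on each $\Pi$ is awkward. Drury's formula bypasses this entirely by expressing $\|\RR f\|_q^q$ as an integral over $(\R^d)^{k+1}$, where only the ordinary Lebesgue change-of-variables formula is needed.
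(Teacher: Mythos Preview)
Your proof is correct and uses essentially the same approach as the paper: the key step in both is the identity $\RRt(f\circ L)(x_0,\ldots,x_k)=\RRt f(Lx_0,\ldots,Lx_k)$ combined with the change of variables $z_i=Lx_i$ in Drury's formula \eqref{druryf}. The only minor difference is that the paper applies this directly to an arbitrary invertible affine $L$ (the identity for $\RRt$ holds for affine maps, not just linear ones, so your preliminary decomposition into a translation and a linear part is unnecessary).
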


\begin{proof}
The proof is a direct consequence of the correspondence formula \eqref{correspR} and of Drury's formula \eqref{druryf}. Indeed, let $L$ be an invertible affine map then
\begin{equation*}
\RRt \left(f \circ L\right)\left(x_0,...,x_k\right) = \RRt f \left(Lx_0, ..., Lx_k\right)
\end{equation*}
and then by the change of variable $z_i=Lx_i$ in Drury's formula we get
\begin{equation*}
\| \RR\left(f\circ L\right) \|_q = |\det\left( L \right)|^{-\frac{1}{p}} \| \RR f \|_q, 
\end{equation*}
which ends the proof.
\end{proof}

Our goal is now to apply theorem \ref{competing}. We have two operators acting on $L^p$ which are increasing the $L^q$-norm of the $k$-plane transform, and preserving the norm of nonnegative, $L^p$-functions. Let us call $V$ the rearrangement operator $f \mapsto f^\ast$, we get the following proposition which is almost the end of the proof:

\begin{proposition}
The operators $V$ and $\Ss$ satisfy the assumptions of theorem \ref{competing}, with the Banach space $L^p$.
\end{proposition}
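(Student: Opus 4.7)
The plan is to verify each of the five hypotheses of theorem \ref{competing} in turn, with $\BB = L^p$, $V$ the radial nonincreasing rearrangement, and $\Ss$ as in lemma \ref{l13}. Most reduce to direct definitional checks with material already established elsewhere in the paper.

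For proper contractiveness, properties $(i)$--$(iv)$ for $V$ are exactly the four items of the Proposition of section \ref{rea}. For $\Ss$: linearity yields homogeneity of degree one; the pointwise formula $\Ss f(u,s) = |s|^{-(k+1)} f(u/s,1/s)$ makes $\Ss$ send $\BB^+$ into itself and preserve pointwise order; lemma \ref{l13} says $\Ss$ is an $L^p$-isometry, hence norm-preserving and automatically contractive. Idempotence $V^2 = V$ is obvious since $f^\ast$ is already radial nonincreasing.

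The substantial step is the competition property. A direct computation from the defining formula gives $\Ss^2 = \Id$, so $\Ss R(V) = \Ss^{-1} R(V)$ and the hypothesis ``$f \in R(V) \cap \Ss R(V)$'' amounts to saying that both $f$ and $\Ss f$ are radial nonincreasing. Writing $f(x) = F(\|x\|^2)$ and using $\|u/s\|^2 + 1/s^2 = (\|u\|^2+1)/s^2$, the radiality of $\Ss f$ translates, with $a = \|u\|^2$ and $b = s^2$, into
\[
b^{-(k+1)/2} F\!\left(\frac{a+1}{b}\right) = G(a+b)
\]
for some function $G$. Fixing $c = a+b$ and letting $b$ range in $(0,c]$ --- so that $t = (c+1)/b - 1$ covers $[1/c, \infty)$ --- pins down $F(t) = C_c (1+t)^{-(k+1)/2}$ on $[1/c,\infty)$. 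Consistency across overlapping intervals as $c$ varies forces $C_c$ to be independent of $c$, and sending $c \to \infty$ extends the formula to all of $(0,\infty)$. Thus $f$ is a positive multiple of $h(x) = (1+\|x\|^2)^{-(k+1)/2}$, and a direct check then yields $\Ss f = f$.

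The same $h$ serves as the fixed point required by theorem \ref{competing}: $h \in L^p$ because $(k+1)p = d+1 > d$; $Vh = h$ since $h$ is radial nonincreasing; and $\Ss h = h$ by the elementary computation just used. The strict-contractiveness condition \eqref{implic11} follows because $h$ is strictly decreasing in the norm, via the classical strict form of the rearrangement inequality. The final point --- a dense cone $\BBt \subset \BB^+$ together with exhausting sets $K_N$ invariant under both $V$ and $\Ss$ and such that $V K_N$ is relatively compact in $L^p$ --- is where I expect the main technical obstacle to lie: because $\Ss$ interchanges behaviour at $0$ and at infinity, naive choices (bounded support, bounded sup-norm) are never simultaneously $V$- and $\Ss$-invariant. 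I would define $K_N$ via rearrangement-invariant level-set conditions that simultaneously control $f$ and $\Ss f$; relative compactness of $V K_N$ then follows from Helly's principle and dominated convergence, in the spirit of the existence argument for theorem \ref{generali}.
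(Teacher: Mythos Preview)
Your verification of proper contractiveness, idempotence of $V$, the competition property, the fixed point $h$, and condition \eqref{implic11} is correct and matches the paper (your direct algebraic derivation of the competition property is essentially the content of lemma \ref{fixepoint}, just organised differently).

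The gap is precisely where you flag it: the construction of the sets $K_N$. You speculate about ``rearrangement-invariant level-set conditions that simultaneously control $f$ and $\Ss f$'' but do not actually produce such sets, and the phrase is too vague to count as a proof. The paper's choice is much simpler than you anticipate and uses only facts you have already established: take
\[
K_N = \{\, f \in L^p : 0 \leq f \leq N h \,\}.
\]
Invariance under both operators is automatic because $V$ and $\Ss$ are order-preserving and each fixes $h$; hence $0 \leq f \leq Nh$ implies $0 \leq Vf \leq V(Nh) = Nh$ and $0 \leq \Ss f \leq \Ss(Nh) = Nh$. Relative compactness of $V K_N$ then follows exactly as you outline --- Helly's principle for the pointwise limit, and dominated convergence with the single $L^p$ majorant $Nh$ for the norm convergence. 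Density of $\bigcup_N K_N$ in $\BB^+$ is immediate since every nonnegative continuous compactly supported function is bounded above by some multiple of $h$.

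So the missing idea is not a new kind of level-set condition but simply to exploit the common fixed point $h$ as a pointwise envelope; once you see this, the ``main technical obstacle'' dissolves.
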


\begin{proof}
$\Ss$ and $V$ are both properly contractive operators. Let us check that $\Ss$ competes with $V$: this is an easy consequence of the below lemma, \ref{fixepoint}. We now have to check that $\Ss$ and $V$ satisfy the assumptions of \ref{competing}. We follow the arguments of Carlen and Loss in \cite{boook}. Let us define 
\begin{equation*}
h\left(x\right) = \left[\dfrac{1}{ 1 + \| x \|^2 } \right]^\frac{k+1}{2}.
\end{equation*}
Then $\Ss h =h$, $Vh = h$, and so with
\begin{equation*}
K_N = \left\{ f \in L^p, 0 \leq f \leq Nh \right\},
\end{equation*}
it is straightforward to check that $VK_N \subset K_N$, $\Ss K_N \subset K_N$. Moreover $VK_N $ is a compact subset of $L^p$, Indeed, let us consider a sequence $f_n \in VK_N$. Then $f_n$ is radial, nonincreasing, and since $h$ lies in $L^\infty$, the sequence $f_n$ is bounded in $L^\infty$. Thus because of Helly's principle $f_n$ admits a subsequence that is converging almost everywhere.  But since $0 \leq f_n \leq Nh$, because of the dominated convergence theorem this subsequence also converges in $L^p$, which implies that $VK_N$ is relatively compact. Moreover, $\tilde{L^p} = \cup_N K_N$ is a dense subset of nonnegative elements of $\tilde{L^p}$ -since nonnegative, continuous, compactly supported functions are dense in $\tilde{L^p}$.\\

The hardest part is to prove the assumption \eqref{implic11}. Fortunately, since $h$ is strictly nonincreasing, it has already been done in \cite{compet}.
\end{proof}

\begin{lem}\label{fixepoint}
Let $h \in L^p$ such that $Vh = \Ss h =h$. Then there exists a constant $C$ such that
\begin{equation*}
h\left(x\right) = C \left[\dfrac{1}{ 1 + \| x \|^2 } \right]^\frac{k+1}{2}.
\end{equation*}
\end{lem}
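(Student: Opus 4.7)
The plan is to combine the two symmetry conditions: $Vh=h$ forces $h$ to be radial and nonincreasing, and then $\Ss h=h$ becomes a functional equation for the radial profile that pins it down uniquely up to a multiplicative constant.

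First, since $V$ is the nonincreasing rearrangement, $Vh=h$ means $h=h^\ast$, so $h$ is radial, nonincreasing, and nonnegative. Write $h(x)=\phi(\|x\|)$ for some $\phi:[0,\infty)\to[0,\infty)$. Next, writing $x=(u,s)\in\R^{d-1}\times(\R\setminus\{0\})$, the identity $\Ss h=h$ combined with radiality (so that $h(u/s,1/s)=\phi(\sqrt{|u|^2+1}/|s|)$) yields the functional equation
\begin{equation*}
\phi\!\left(\sqrt{|u|^2+s^2}\right) \;=\; \frac{1}{|s|^{k+1}}\,\phi\!\left(\frac{\sqrt{|u|^2+1}}{|s|}\right),\qquad u\in\R^{d-1},\ s\neq 0.
\end{equation*}

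The key step is to parameterize. Given $t>0$ and $\rho>0$, I want to choose $(u,s)$ so that $\sqrt{|u|^2+s^2}=t$ and $\sqrt{|u|^2+1}/|s|=\rho$. Solving the system $|u|^2+s^2=t^2$, $|u|^2+1=\rho^2 s^2$ gives $s^2=(t^2+1)/(\rho^2+1)$ and $|u|^2=\rho^2 s^2-1$, which is nonnegative precisely when $t\rho\geq 1$. Substituting back into the functional equation collapses it to
\begin{equation*}
(1+t^2)^{(k+1)/2}\,\phi(t) \;=\; (1+\rho^2)^{(k+1)/2}\,\phi(\rho),\qquad t,\rho>0,\ t\rho\geq 1.
\end{equation*}

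Finally, for arbitrary $t_1,t_2>0$, choose $T$ large enough that $T\geq 1/t_1$ and $T\geq 1/t_2$; applying the identity twice through $T$ shows the quantity $(1+t^2)^{(k+1)/2}\phi(t)$ is globally constant, equal to some $C\geq 0$. This gives $\phi(t)=C(1+t^2)^{-(k+1)/2}$, i.e.\ $h(x)=C[1/(1+\|x\|^2)]^{(k+1)/2}$, as required. There is really no obstacle here: once the algebraic reduction $|u|^2=\rho^2 s^2-1$, $s^2=(t^2+1)/(\rho^2+1)$ is noticed, everything else is bookkeeping. The only step one needs to double-check is that $V h = h$ indeed forces radiality (which is immediate from the characterization \eqref{carac}).
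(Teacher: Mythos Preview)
Your proof is correct and follows essentially the same approach as the paper: reduce to a radial profile $\phi$ via $Vh=h$, then exploit $\Ss h=h$ as a functional equation for $\phi$ and solve it. The only difference is tactical --- the paper evaluates along the specific curve $(u,\sqrt{1+u^2})$, chosen so that $(u/s,1/s)$ lands on the unit sphere and the right-hand side collapses to $h(e_d)\,(1+u^2)^{-(k+1)/2}$, giving the formula directly for $\|x\|\geq 1$ (and then invoking $\Ss h=h$ once more for $\|x\|<1$); you instead parametrize both arguments freely to obtain the invariance of $(1+t^2)^{(k+1)/2}\phi(t)$ on the region $t\rho\geq 1$ and then remove that constraint by transitivity through a large $T$.
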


\begin{proof}
Let us choose $h$ such that $\Ss h=Vh=h$, then $h$ is equal to its own rearrangement and so is defined on -at least- $\R^d-\left\{0\right\}$. Moreover, $\Ss h$ must be radial. This leads to
\begin{equation*}
\Ss h \left(u,\sqrt{1+u^2}\right) = \left[\dfrac{1}{ 1 + u^2 } \right]^\frac{k+1}{2} h \left( \dfrac{u}{\sqrt{1+u^2}}, \dfrac{1}{\sqrt{1+u^2}} \right) =  \left[\dfrac{1}{ 1 + u^2 } \right]^\frac{k+1}{2} h\left(e_d\right)
\end{equation*}
using that $h$ is radial. But since $h=\Ss h$ is also radial,
\begin{equation*}
\Ss h \left(u,\sqrt{1+u^2}\right) = \Ss h\left(0, \sqrt{1+2u^2}\right) = h\left(0, \sqrt{1+2u^2}\right)
\end{equation*}
we get the equality
\begin{equation}\label{eq38}
h\left(x\right) = h\left(0, \|x \|\right) = \left[\dfrac{2}{ 1 + \| x \|^2 } \right]^\frac{k+1}{2} h\left(e_d\right)
\end{equation}
for all $x \in \R^d$ such that $\| x \| \geq 1$. For $\|x \| < 1$ the equality $\Ss h = h$ shows that \eqref{eq38} is also right, which proves the lemma.
\end{proof}

\subsection*{Proof of the theorem}
Now we have all the material that we need to prove \ref{sharp}. Let $f_0 \geq 0$ be an extremizer for \eqref{map1}, whose existence has already been proved. Let us define the limit 
\begin{equation*}
\disp{h= Tf_0 = \lim_{n \rightarrow \infty} \left(V \Ss\right)^n f_0}. 
\end{equation*}
Then because of the inequality \eqref{inequat1} and the equality \eqref{equal11}, $h$ is still an extremizer. Moreover, because of theorem \ref{competing}, $Vh = \Ss h = h$ and then $h$ satisfies the assumptions of lemma \ref{fixepoint}. We then get:
\begin{equation*}
h\left(x\right) = h\left(e_d\right) \left[\dfrac{2}{ 1 + \| x \|^2 } \right]^\frac{k+1}{2}.
\end{equation*}

\subsection*{Value of the best constant} Here we describe the way to compute the value of the best constant. We use the correspondence \eqref{RT} described in the previous section, and the formula \eqref{RTnorm}, and only think about $\TT$ and its related measurable spaces instead of $\RR$. Let $h$ be the radial extremizer
\begin{equation*}
h\left(r\right) = \left[ \dfrac{1}{1+r^2} \right]^\frac{k+1}{2}.
\end{equation*}
A family of integrals will be useful to compute its $L^p$-norm and the $L^q$-norm of $\TT h$. These integrals are defined as
\begin{equation*}
I\left(m,n\right) = \disp{\int_0^\infty} \dfrac{t^m}{\left(1+t^2\right)^\frac{n}{2}} dt .
\end{equation*}
The change of variable $t=\tan\left(\theta\right)$ states a relation between $I\left(n,m\right)$ and the function $\beta$:
\begin{equation*}
I\left(m,n\right) = \dfrac{1}{2} \beta \left(\dfrac{m+1}{2} , \dfrac{n-m-1}{2} \right).
\end{equation*}
We recall that the function $\beta$ is defined as
\begin{equation*}
\beta\left(x,y\right) = \disp{\dfrac{1}{2} \int_{\theta =0}^{\frac{\pi}{2}} \sin\left(\theta\right)^{2x-1} \cos\left(\theta\right)^{2y-1} d\theta }.
\end{equation*}
Then:
\begin{equation*}
\| h \|_p^p = \disp{\int_0^\infty \dfrac{r^{d-1}dr}{\left(1+r^2\right)^\frac{d+1}{2}} = I\left(d-1,d+1\right) = \dfrac{1}{2} \beta \left(\dfrac{d}{2},\dfrac{1}{2}\right) };
\end{equation*}
\begin{equation*}
\TT h \left(r\right) = \dfrac{1}{\sqrt{1+r^2}} \disp{ \int_0^\infty \dfrac{u^{k-1} du}{\left(1+u^2\right)^\frac{k+1}{2}} } = \dfrac{1}{\sqrt{1+r^2}} I\left(k-1,k+1\right)
\end{equation*}
using the change of variable $s^2 = \left(1+r^2\right) u^2$;
\begin{equation*}
\| \TT h \|_q^q = I\left(k-1,k+1\right)^q \disp{ \int_0^\infty \dfrac{r^{d-k-1} dr}{\left(1+r^2\right)^\frac{d+1}{2}}  = I\left(k-1,k+1\right)^q I\left(d-k-1,d+1\right)}.
\end{equation*}
Now let us recall the fundamental relations:
\begin{equation*}
\beta\left(x,y\right) = \dfrac{\Gamma\left(x\right) \Gamma\left(y\right)}{\Gamma\left(x+y\right)};
\end{equation*}
\begin{equation*}
\dfrac{1}{2} |S^{n-1}| \Gamma\left(\dfrac{n}{2}\right) = \pi^\frac{n}{2}.
\end{equation*}
They lead to the formula
\begin{equation*}
A\left(k,d\right) =\dfrac{\| \RR h \|_q}{\| h \|_p} =  \pi^\frac{d-k}{2\left(d+1\right)} \cdot \Gamma\left(\dfrac{d+1}{2}\right)^\frac{k}{d+1} \cdot \Gamma\left(\frac{k+1}{2}\right)^{-\frac{d}{d+1}} = \left[ 2^{k-d}\dfrac{|S^k|^d}{|S^d|^k} \right]^\frac{1}{d+1}.
\end{equation*}
For instance, in the cases of the X-ray and the Radon transform transform in the $3$-dimensional space,
$$A\left(1,3\right) = \pi^\frac{1}{4} \simeq 1,33 $$
$$A\left(2,3\right) = \pi^{-\frac{3}{8}} \simeq 0,651.$$

\subsection*{An alternative proof that does not use the existence theorem} Let us consider an extremizing sequence for \eqref{map1}, called $f_m$. Then $|f_m|$ is still an extremizing sequence and so we can assume that $f_m$ is nonnegative. Then for each integer $m$, the sequence $\left(\Ss V\right)^n f_m$ converges to a function $h_m$ whose value is given by \ref{fixepoint},
 \begin{equation*}
 h_m\left(x\right) =  C_m \left[\dfrac{1}{ 1 + \| x \|^2 } \right]^\frac{k+1}{2}.
 \end{equation*}
Moreover, since $f_m$ is normalized, it is the same for $h_m$ which forces the constant $C_m$ to be independent of $m$, and then $h_m$ to be independent of $m$. At last, 
\begin{equation*}
A \longleftarrow \| \RR f_m \|_q \leq \| \RR h \|_q
\end{equation*}
which proves that $h$ is an extremizer, \textit{without the existence part of theorem \ref{extrem}}.

\section{The question of the uniqueness}

We shall discuss here the question of the uniqueness of extremizers of \eqref{map2}. We will assume $d \geq 3$. This is not annoying: indeed, for the case $d=2$, the only $k$-plane transform is the Radon transform and has been thoroughly  studied by Christ in \cite{rdextr}.\\

The uniqueness problem for the Radon transform has been solved by Christ in his paper \cite{rdextr}. The main tool for the proof is the following:
\begin{theorem}\label{compoL}
Let $k=d-1$, and $f$ be a nonnegative extremizer. Then there exist a radial, nonincreasing, nonnegative extremizer $F$, and an invertible affine map $L$, such that $f=F \circ L$.
\end{theorem}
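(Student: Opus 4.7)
The plan is to derive the affine-radial decomposition of a nonnegative extremizer by analyzing the equality case of the rearrangement inequality \eqref{inequat1}, using the multilinear representation of $\|\RR f\|_q^q$ furnished by Drury's formula.

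First, since $f \geq 0$ is an extremizer, the chain
\begin{equation*}
A \;=\; \frac{\|\RR f\|_q}{\|f\|_p} \;\leq\; \frac{\|\RR f^\ast\|_q}{\|f^\ast\|_p} \;\leq\; A
\end{equation*}
collapses, so equality holds in $\|\RR f\|_q \leq \|\RR f^\ast\|_q$ and $f^\ast$ is itself a radial nonincreasing extremizer. The theorem thus reduces to showing that this equality case can only occur if $f$ has nested, co-axial ellipsoidal super-level sets, since such sets are precisely the level sets of an affine rearrangement $F \circ L$ of a radial nonincreasing $F$.

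Second, I would open up Drury's formula \eqref{druryf} specialized to $k = d - 1$ (so $d - k = 1$),
\begin{equation*}
\|\RR f\|_q^q \;=\; \int_{(\R^d)^d} f(x_0) \cdots f(x_{d-1}) \, \tilde{\RR} f(x_0,\ldots, x_{d-1}) \, dx_0 \cdots dx_{d-1},
\end{equation*}
an integral that is $(d+1)$-linear in $f$. Through this identity the rearrangement inequality $\|\RR f\|_q \leq \|\RR f^\ast\|_q$ becomes a Brascamp--Lieb--Luttinger type inequality that can be obtained by iterating Steiner symmetrizations with respect to hyperplanes. I would then run the equality analysis step by step: a single Steiner symmetrization in a hyperplane $H$ strictly decreases the above multilinear functional unless the super-level sets $\{f \geq t\}$ are already reflection-symmetric about a translate of $H$. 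Varying $H$ over a sufficiently rich family and using the affine invariance of \eqref{map1} (cf.\ lemma \ref{affine}) to normalize a common center, one concludes that every super-level set of $f$ coincides, modulo a null set, with an ellipsoid, and that all these ellipsoids share a common center and principal axes.

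Once all super-level sets are nested co-axial ellipsoids, a single invertible affine map $L$ sends them simultaneously to balls centered at the origin, whereupon $F := f \circ L^{-1}$ is radial nonincreasing and $f = F \circ L$. The principal obstacle will be the simultaneous-equality analysis across a continuous family of Steiner symmetrizations: the single-symmetrization equality case is classical (Brothers--Ziemer), but to rule out pathological level-set geometries that might individually satisfy each equality while failing collectively to be ellipsoidal, one must exploit both the special multilinear structure of Drury's integrand -- in particular the way $\tilde{\RR} f$ couples all the factors through the single $k$-plane they span -- and the full affine invariance to fix the common center and the shared axes. This is the technical heart of the result and of Christ's treatment in \cite{rdextr}.
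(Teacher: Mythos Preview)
The paper does not prove this theorem. It is quoted verbatim as a result of Christ from \cite{rdextr}, introduced with the sentence ``The uniqueness problem for the Radon transform has been solved by Christ in his paper \cite{rdextr}. The main tool for the proof is the following,'' and then immediately used as a black box (more precisely, as the $k=d-1$ instance of the hypothesis in Theorem~\ref{uniqi}). So there is no ``paper's own proof'' to compare your proposal against.

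That said, a brief comment on your sketch versus what the paper indicates about Christ's argument. You propose to attack the equality case of $\|\RR f\|_q \le \|\RR f^\ast\|_q$ directly through Drury's formula and a Brascamp--Lieb--Luttinger / iterated Steiner-symmetrization analysis. This is a coherent strategy for this kind of statement, and you correctly identify that the hard part is the \emph{simultaneous} equality case across a full family of symmetrizations, which is exactly where the work lies. However, the paper's surrounding discussion suggests Christ's route is different: it says Christ exploits a \emph{correspondence with a convolution operator} developed in \cite{qe christ}, \cite{extr christ}, \cite{smooth christ}, and then transfers the extremizer characterization through that correspondence. So your outline is not the approach the cited source takes, even though it targets the same conclusion.

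Finally, note that what you have written is a plan, not a proof: the crucial step --- that equality in every Steiner symmetrization forces all super-level sets to be \emph{co-axial} ellipsoids (same center, same axes) --- is asserted but not carried out, and you yourself flag it as ``the technical heart'' and defer to \cite{rdextr}. In the context of this paper that is appropriate, since the theorem is only being cited; but as a standalone proof it would be incomplete at precisely the decisive point.
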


Then it followed that all the work was almost done. Christ characterized all the extremizers, using the uniqueness theorem \ref{compoL} two times, in a certain sense. His approach is very interesting because the question of the uniqueness is curiously intertwined with the question of the existence. Here we want to develop a different approach, for an arbitrary $0 \leq k \leq d-1$, \textit{assuming} that a result similar to theorem \ref{compoL} is true. This is for instance the case of the Radon transform -see above- and the X-Ray transform -proved by Taryn Flock, to appear.  More accurately, we want to prove the following:
\begin{theorem}\label{uniqi}
Let $1 \leq k \leq d-1$. Assume that any extremizer for the $k$-plane transform inequality \eqref{map1} can be written $f \circ L$ with $f$ a radial, nonincreasing extremizer and $L$ an affine map. Then any extremizer can be written
\begin{equation}\label{uniqextr}
\left[\dfrac{C}{1+\| L x \|^2}\right]^\frac{k+1}{2}
\end{equation}
with $C>0$ and $L$ an invertible affine map.
\end{theorem}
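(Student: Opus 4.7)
The plan is to reduce Theorem \ref{uniqi} to the classification of \emph{radial} nonincreasing extremizers. By the standing hypothesis, every extremizer is of the form $F\circ L_0$ with $F$ a radial nonincreasing extremizer and $L_0$ invertible affine, so it is enough to show that any such $F$ is a dilate of the Cauchy profile $h(x)=[1/(1+\|x\|^2)]^{(k+1)/2}$; once $F(t) = C(r^2+t^2)^{-(k+1)/2}$ is established, the dilation $x\mapsto x/r$ can be absorbed into $L_0$ to rewrite $F\circ L_0$ in the form \eqref{uniqextr}.

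Let then $F$ be a radial nonincreasing extremizer. By Lemma \ref{l13}, $\Ss F$ is still an extremizer, so by the hypothesis $\Ss F = \tilde F \circ L$ for some radial nonincreasing extremizer $\tilde F$ and some invertible affine map $L$. The level sets of $\tilde F$ being concentric Euclidean spheres, the level sets $\{\Ss F = c\}$ are their $L^{-1}$-images, that is, concentric similar ellipsoids with $c$-independent axis ratios. On the other hand, writing a point of $\R^d$ as $(u,s)\in\R^{d-1}\times\R$ and using that $F$ is radial gives the explicit formula
\begin{equation*}
\Ss F(u,s) \,=\, \dfrac{1}{|s|^{k+1}}\,F\!\left(\dfrac{\sqrt{\|u\|^2+1}}{|s|}\right),
\end{equation*}
which is rotationally symmetric in $u$ around the $s$-axis and even in $s$. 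Combining both pieces of information, the common center of the ellipsoids must lie on the $s$-axis (by rotational symmetry) and then at the origin (by the parity in $s$), with two principal axes of equal length in the $u$-plane. The level sets thus have the form $\{\|u\|^2/a(c)^2 + s^2/b(c)^2 = 1\}$ with a ratio $a(c)/b(c) = r$ that is independent of $c$.

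It remains to read off $F$ from this geometry. Solving $\Ss F(u,s)=c$ for $\|u\|^2$ and equating the result to $r^2(b(c)^2-s^2)$ produces the functional equation
\begin{equation*}
s^2\bigl[F^{-1}(c|s|^{k+1})^2 + r^2\bigr] \,=\, r^2 b(c)^2 + 1.
\end{equation*}
Fixing $c=1$ and substituting $v=s^{k+1}$ reduces this to $F^{-1}(v)^2 + r^2 = A\,v^{-2/(k+1)}$ for some positive constant $A$, which inverts to $F(t) = C(r^2+t^2)^{-(k+1)/2}$, a dilate of $h$. The delicate step is the preceding paragraph: the whole argument hinges on cleanly converting the two independent facts --- that $\Ss F$ has concentric similar ellipsoidal level sets (from the hypothesis) and that it inherits axial and parity symmetries from $F$ (by direct computation) --- into a sufficiently rigid normal form for the level sets to make the final functional equation solvable in closed form.
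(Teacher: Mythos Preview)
Your route is genuinely different from the paper's and more direct. Both start by writing $\Ss F=\tilde F\circ L$ and deducing that the level sets of $\Ss F$ are concentric similar ellipsoids; the paper then applies the rearrangement $V$, shows that $(V\Ss)^2$ acts on radial nonincreasing extremizers as a dilation, and invokes the convergence $(V\Ss)^{2n}f\to h$ furnished by Theorem~\ref{competing} to conclude. You instead observe directly that $\Ss F$ is even in $s$ (which centers the ellipsoids at the origin immediately---a step the paper reaches only \emph{after} applying $V$) and then solve a functional equation for $F$, bypassing the iteration entirely. This is more elementary and self-contained.

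Two points need repair. First, you tacitly assume that $\tilde F$ is \emph{strictly} decreasing (so that level sets are ellipsoidal surfaces rather than thick shells) and later that $F$ is injective (to invoke $F^{-1}$). Neither is automatic for a nonincreasing radial extremizer; the paper devotes a separate lemma to proving strict monotonicity of the radial profile of $\Ss f$ from its explicit behaviour along curves, and you need the analogue. Once $\tilde F$ is strictly decreasing, injectivity of $F$ does follow: a flat piece of $F$ would produce a region of positive measure on which $\Ss F$ depends only on $|s|$, which is incompatible with $(d-1)$-dimensional ellipsoidal level surfaces. Second, fixing $c=1$ yields $F^{-1}(v)^2+r^2=A\,v^{-2/(k+1)}$ only for $v\in(0,b(1)^{k+1}]$, which determines $F$ only on $[1/b(1),\infty)$. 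To recover $F$ everywhere you must let $c$ vary: with $w=c|s|^{k+1}$ the relation reads $F^{-1}(w)^2+r^2=(r^2b(c)^2+1)\,c^{2/(k+1)}\,w^{-2/(k+1)}$, and since the left side depends on $w$ alone while the admissible ranges of $w$ for different $c$ overlap, the coefficient $(r^2b(c)^2+1)\,c^{2/(k+1)}$ is forced to be independent of $c$. Both gaps are fillable; the first is the more substantial.
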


This is not as simple as we could guess. Indeed, it shows that we can deduce global uniqueness from the single uniqueness modulo radial extremizers. Of course one of the main tool here will be the use of the symmetry $\Ss$ combined with the fact that an extremizer is a radial function composed with an affine map. Thus we will use again the competing symmetry theory. From now we will assume that $k$ is such that any extremizer for \eqref{map1} can be written $f \circ L$ with $f$ radial and $L$ an affine map. Our main lemma is the following:

\begin{lem}
Let $f$ be a radial nonincreasing extremizer. Then $\left(V\Ss\right)^2$ acts on $f$ as a dilatation.
\end{lem}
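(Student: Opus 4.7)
The strategy is to trace $f$ through two applications of $V\Ss$, decomposing each intermediate extremizer via the standing hypothesis of this section. Since $\Ss f$ is an extremizer by Lemma~\ref{l13}, the hypothesis lets me write $\Ss f = F \circ L$ with $F$ a radial nonincreasing extremizer and $L$ an invertible affine map of linear part $A$. The superlevel sets of $F \circ L$ are preimages under $L$ of Euclidean balls, with volumes scaled by $|\det A|^{-1}$; rearranging therefore gives $V\Ss f = F \circ D_\alpha$ where $D_\alpha x = \alpha x$ and $\alpha = |\det A|^{1/d}$. Iterating the same argument with the radial nonincreasing extremizer $V\Ss f$ in place of $f$, I obtain $(V\Ss)^2 f = G \circ D_{\alpha'}$ for some radial nonincreasing extremizer $G$ and scalar $\alpha' > 0$.

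It then remains to verify that $G$ is a dilation of $f$. For this I would exploit $\Ss^2 = \Id$, which follows directly from the involution $\Phi \circ \Phi = \Id$ (with $\Phi(x',x_d) = (x'/x_d, 1/x_d)$) together with the exact cancellation of the two $|x_d|^{-(k+1)}$ weight factors in $\Ss \circ \Ss$. Applying $\Ss$ to both sides of $\Ss f = F \circ L$ yields $f = \Ss(F \circ L)$, which is an extremizer. The hypothesis decomposes it as (radial nonincreasing) $\circ$ (affine); but this function equals $f$, itself radial nonincreasing, so the affine factor must be radial-preserving (a composition of a dilation and a rotation), and the radial factor of $\Ss(F \circ L)$ lies in the dilation class of $f$.

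The main obstacle will be to identify this radial factor with $G$, the radial factor of $\Ss(F \circ D_\alpha) = \Ss(V\Ss f)$. Notice that the two extremizers $F \circ L$ and $F \circ D_\alpha$ share the same rearrangement and differ only by postcomposition with the volume-preserving affine map $M = D_\alpha^{-1} L$. The task therefore reduces to showing that $V\Ss$ descends to a well-defined map on dilation classes of radial factors, equivalently that $V\Ss(g \circ M)$ lies in the dilation class of $V\Ss g$ for any volume-preserving affine $M$. I would attempt this by combining the explicit intertwining of $\Ss$ with pure dilations (which converts the dilation $x \mapsto cx$ on the argument of $F$ into an anisotropic rescaling of only the last coordinate on the argument of $\Ss F$, up to a scalar factor $c^{-(k+1)}$) with the standard dilation-equivariance of the rearrangement under such an anisotropic rescaling; the general volume-preserving case can then be reduced to these building blocks by invoking the structural hypothesis once more. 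Once this equivariance is in hand, $G$ lies in the dilation class of $f$, so $(V\Ss)^2 f = G \circ D_{\alpha'}$ is a dilation of $f$, as claimed.
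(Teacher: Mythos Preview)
Your reduction is sound up to the point you yourself flag as the ``main obstacle'': you correctly observe that $(V\Ss)^2 f$ is radial nonincreasing (trivially, since $V$ outputs such functions), and that via $\Ss^2=\Id$ the question becomes whether $V\Ss(g\circ M)$ and $V\Ss(g)$ lie in the same dilation class when $g=F\circ D_\alpha$ is radial and $M=D_\alpha^{-1}L$ is volume-preserving affine. But this last claim \emph{is} the lemma, and your sketch for it does not go through. The intertwining you describe works only for very special $M$: linear maps of the form $(u,s)\mapsto(Au,s)$ commute with $\Phi$, and anisotropic dilations $(u,s)\mapsto(cu,c's)$ conjugate to other anisotropic dilations; in both cases $V\Ss$ does descend to dilation classes. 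However, a translation $(u,s)\mapsto(u,s+s_0)$ in the last coordinate conjugates under $\Phi$ to a genuinely projective (non-affine) map, the weight factors do not match, and there is no reason for $V\Ss(g\circ M)$ to be a dilation of $V\Ss(g)$. Your proposal to ``invoke the structural hypothesis once more'' to reduce a general $M$ to these building blocks is not an argument: the hypothesis only tells you that certain extremizers decompose as (radial)$\circ$(affine), it gives no control on the affine part.

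The paper closes exactly this gap by showing that the specific $L$ arising from $\Ss f=F\circ L$ is forced to be block-diagonal with \emph{no} translation in the $e_d$ direction. This uses the explicit identity $\Ss f(u,\sqrt{1+u^2})=f(e_d)(1+u^2)^{-(k+1)/2}$, valid because $f$ is radial, together with a strict-monotonicity argument for $F$ and an algebraic analysis of when $\|x_0+Lu+\sqrt{1+u^2}\,Le_d\|$ can depend only on $\|u\|$; a parity argument then kills $s_0$. Only after $L$ is pinned down to this form does the explicit computation of $V\Ss f$ and $(V\Ss)^2 f$ become possible. Your abstract approach discards all information about $L$ beyond its determinant at the rearrangement step, and that information is precisely what is needed.
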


Let us choose $f$ a radial nonincreasing extremizer. Then $f$ is not the -almost everywhere- null function. Thus there exists $\lambda_0 > 0$ such that $f\left(\lambda_0 e_d\right) \neq 0$. As we will explain later, there is no harm assuming $\lambda_0 =1$, using that the dilations group is a symmetry group.\\

$\Ss f $ is also an extremizer. It follows that there exist $F:\R^+ \rightarrow \R$, nonincreasing, a linear invertible map $L$ and a vector $x_0 \in \R^d$ such that 
\begin{equation}\label{equat199}
\Ss f \left(x\right) = F\left( \| x_0 + Lx \| \right).
\end{equation} 
Computing $\Ss f \left( u , \sqrt{u^2+1} \right)$, we get
\begin{equation}\label{equ17}
f\left( e_d \right) \left[\dfrac{1}{1+u^2} \right]^\frac{k+1}{2} = F\left(\| x_0 + Lu + \sqrt{1+u^2} L e_d \|\right),
\end{equation}
for all $u \in \R^{d-1} \times \left\{ 0 \right\}$. Let $C = f\left(e_d\right) \neq 0$.

\begin{lem}
The map $F$ is decreasing, as a function of the norm.
\end{lem}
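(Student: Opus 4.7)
My plan is to proceed by contradiction. Assume $F$ is nonincreasing but not strictly decreasing on $\{F>0\}$: there exist $0 < r_1 < r_2$ and $v > 0$ with $F \equiv v$ on $[r_1, r_2]$. Define
\begin{equation*}
r^+ := \sup\{r \geq 0 : F(r) > v\}, \qquad r^- := \inf\{r \geq 0 : F(r) < v\},
\end{equation*}
so that $r^+ \leq r_1 < r_2 \leq r^-$, and in particular $r^+ < r^-$. Setting $u = 0$ in \eqref{equ17} gives $F(\|x_0 + Le_d\|) = C$, showing that $C$ lies in the range of $F$; I first dispose of the case $v \leq C$, and then flag the complementary case as the main obstacle.

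When $v \leq C$, let $\rho_0 \geq 0$ be the unique value with $C(1+\rho_0^2)^{-(k+1)/2} = v$. Fix any unit vector $e \in \R^{d-1} \times \{0\}$ and introduce the continuous function
\begin{equation*}
\phi(t) := \| x_0 + t L e + \sqrt{1+t^2}\, L e_d \|, \quad t \geq 0.
\end{equation*}
Restricting \eqref{equ17} to the ray $u = te$ gives $F(\phi(t)) = C(1+t^2)^{-(k+1)/2}$. For $t < \rho_0$ the right-hand side exceeds $v$, hence by the nonincrease of $F$ one has $\phi(t) \leq r^+$; for $t > \rho_0$ it lies below $v$, forcing $\phi(t) \geq r^-$. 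Passing to the left and right limits at $t = \rho_0$ and using the continuity of $\phi$ yields $\phi(\rho_0) \leq r^+$ and $\phi(\rho_0) \geq r^-$, giving $r^+ \geq r^-$ in contradiction with $r^+ < r^-$.

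The main obstacle is the complementary case $v > C$: the flat segment of $F$ then lies at radii smaller than $\|x_0+Le_d\|$, which are not directly swept out by the parametrization $t \mapsto \phi(t)$ above. I would handle this by exploiting the involution $\Ss^2 = \mathrm{id}$ (a direct computation from the definition of $\Ss$): applying $\Ss$ once more to the identity \eqref{equat199} recovers $f$ itself as an explicit expression involving $F$, and since $f$ is radial the resulting identity constrains $F$ on all of $[0,\infty)$. A second continuity argument carried out on a different continuous curve then rules out any flat segment at height $v > C$ by the same topological mechanism: a strictly decreasing continuous function cannot factor as $F \circ \psi$ with $\psi$ continuous if $F$ has a plateau at a value in its range.
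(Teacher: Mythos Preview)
Your one-dimensional continuity argument for the range $0<v<C$ is correct and takes a genuinely different route from the paper. The paper argues via Hausdorff measure: writing $\Phi(u)=\|x_0+Lu+\sqrt{1+\|u\|^2}\,Le_d\|$, a plateau $[\alpha,\beta]$ of $F$ would force $\Phi^{-1}([\alpha,\beta])$ to lie on a single sphere $\{\|u\|=R\}$ in $\R^{d-1}$, while a bi-Lipschitz comparison with an annulus shows this set has positive $(d-1)$-measure. Your intermediate-value argument along a single ray $u=te$ is more elementary and sidesteps that bookkeeping.

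There is, however, a genuine gap. Your treatment of $v>C$ is only a sketch: you invoke $\Ss^{2}=\Id$ and assert that ``a second continuity argument carried out on a different continuous curve'' finishes the job, but you produce no such curve and do not verify that its range reaches the small radii where $F>C$. Your $v\le C$ argument also breaks at the boundary $v=C$, since then $\rho_0=0$ and there is no left limit to take; and you omit the case $v=0$, which the paper handles separately via $\Phi(u)\to\infty$. That said, one can check directly from \eqref{equ17} that $\Phi$ attains its minimum uniquely at $u=0$, with value $m=\|x_0+Le_d\|$: since $F(\Phi(u))\le C=F(\Phi(0))$ with strict inequality for $u\ne 0$, the nonincrease of $F$ forces $\Phi(u)>\Phi(0)$ for every $u\ne 0$. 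Hence $F(m)=C$ and $F<C$ on $(m,\infty)$, so any plateau contained in the range $[m,\infty)$ of $\Phi$ automatically has level strictly between $0$ and $C$, and your argument (together with the easy $v=0$ case) already yields strict decrease of $F$ on $[m,\infty)$. This is exactly what the next step of the paper uses---injectivity of $F$ on the range of $\Phi$---so the cases you leave open concern the lemma as literally stated rather than its application.
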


\begin{proof}
Let us assume that there exists $0 \leq \alpha \leq \beta$ such that $F$ is constant on $\left[\alpha, \beta\right]$. If $F\left(\alpha\right)=0$ then since $F$ is nonnegative, nonincreasing, $F=0$ on $\left[\alpha, \infty\right)$. Thus, because of \eqref{equ17}, for all $u \in \Rd$,
\begin{equation*}
\| Lu + \sqrt{1+u^2} Le_d +x_0 \| < \alpha.
\end{equation*}
This is not possible: indeed, let us write $Lz_0=x_0$, then since $L^{-1}$ is Lipschitz, there exists a constant $c>0$ such that for all $u \in \Rd$
\begin{equation}\label{eq56}
c^2 \| u + \sqrt{1+u^2} e_d +z_0 \|^2 < \alpha^2 .
\end{equation}
Let us develop the right member:
\begin{equation}\label{eqa}
\| u + \sqrt{1+u^2} e_d +z_0 \|^2 = 1+2u^2 + 2\lr{u,z_0} + 2\lr{\sqrt{1+u^2}e_d, z_0}+2\lr{\sqrt{1+u^2}e_d,u}.
\end{equation}
Now let us choose $u=re_1$. Then \eqref{eqa} is equivalent to $2r^2$ as $r \rightarrow \infty$. Thus \eqref{eq56} cannot hold and $F\left(\alpha\right)$ must be positive.\\

 Then it follows from \eqref{equ17} that whenever $\|x_0 + Lu + \sqrt{1+u^2} L e_d \| \in \left[\alpha, \beta\right]$, $\| u \|$ must be constant, let us say equal to $R$:
\begin{equation*}
\|x_0 + Lu + \sqrt{1+R^2} L e_d \| \in \left[\alpha, \beta\right] \Rightarrow \| u \| =R.
\end{equation*}
Let us call $y_0 = x_0 + \sqrt{1+R^2} L e_d$ and $v=Lu+y_0$. Then
\begin{equation*}
\|v \| \in \left[\alpha, \beta\right] \Rightarrow \| L^{-1}\left(v-y_0\right) \| =R.
\end{equation*}
Let us call $\CC \subset \R^{d-1}$ the ring of minimum radius $\alpha$ and maximum radius $\beta$, and $H$ the $d-1$-plane $L^{-1}\left(\Rd-y_0\right)$. The application
$$\begin{array}{ccccc}
& \Psi: & \CC & \longrightarrow & H\\
& & v & \longmapsto &  L^{-1}\left(v-y_0\right) \end{array}$$
is Lipschitz, and so is its inverse. Thus 
\begin{equation*}
\HHH ^{d-1}\left(\Psi\left(\CC\right)\right) \geq c \HHH ^{d-1}\left(\CC\right)
\end{equation*}
where $\HHH^{d-1}$ is the $d-1$-Hausdorff measure and $c>0$ is a constant. But $\Psi\left(\CC\right)$ is the intersection of the $d-1$-plane $H$ and the $d-1$-sphere of radius $R$, which is of null $\HHH^{d-1}$-measure, thus so is $\CC$: $\alpha = \beta$.\end{proof}

The function $F$ is then injective. That shows that $\|x_0 + Lu + \sqrt{1+u^2} L e_d \|$ must be a function of $u^2$ only. The following lemma makes us conclude:

\begin{lem}
Let $L$ an invertible linear map such that $\|x_0 + Lu + \sqrt{1+u^2} L e_d \|$ depends only on $\| u \|$. Then $L\left(\Rd\right) \subset \spane\left(Le_d\right)^\perp$, and $L|_{\Rd}$ preserves the norm, modulo a multiplicative constant. At last, there exists $s_0 \in \R^d$ such that $x_0 = s_0 Le_d$.
\end{lem}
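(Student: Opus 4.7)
The plan is to expand the squared norm $\phi(u) := \|x_0 + Lu + \sqrt{1+\|u\|^2}\, Le_d\|^2$ and exploit that it depends only on $r := \|u\|$. Fixing $r$ and letting $u$ vary over the $(d-2)$-sphere of radius $r$ inside $\R^{d-1}\times\{0\}$, every term in the expansion whose value depends on the direction of $u$ must in fact be constant on that sphere. The standing assumption $d \geq 3$ of this section ensures $d-1 \geq 2$, which is what gives teeth to this rigidity argument.

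Expanding the square produces
\begin{equation*}
\phi(u) = \|x_0\|^2 + \|Lu\|^2 + (1+\|u\|^2)\|Le_d\|^2 + 2\lr{x_0, Lu} + 2\sqrt{1+\|u\|^2}\,\lr{x_0, Le_d} + 2\sqrt{1+\|u\|^2}\,\lr{Lu, Le_d}.
\end{equation*}
The terms $\|x_0\|^2$, $(1+\|u\|^2)\|Le_d\|^2$, and $\sqrt{1+\|u\|^2}\lr{x_0, Le_d}$ already depend only on $\|u\|$, so I would focus on the other three. The linear functional $u \mapsto \lr{L^T x_0, u}$ can be constant on each sphere $\|u\|=r$ only when it vanishes identically there, forcing $x_0$ to be orthogonal to $L(\R^{d-1}\times\{0\})$. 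The same reasoning applied to $u \mapsto \lr{L^T L e_d, u}$ (whose coefficient $\sqrt{1+\|u\|^2}$ is constant on each sphere) yields $L(\R^{d-1}\times\{0\}) \perp Le_d$, which is exactly the claimed inclusion $L(\R^{d-1}\times\{0\}) \subset \spane(Le_d)^\perp$. Finally, the quadratic form $u \mapsto \lr{L^T L u, u}$ restricted to $\R^{d-1}\times\{0\}$ must be spherically symmetric, hence a positive scalar multiple of $\|u\|^2$, giving the conformality claim for $L|_{\R^{d-1}\times\{0\}}$.

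For $x_0 = s_0 Le_d$ I would finish with a one-line dimension count: since $L$ is invertible, $L(\R^{d-1}\times\{0\})$ is a $(d-1)$-dimensional subspace of $\R^d$, so its orthogonal complement is a line, and from the preceding paragraph this line contains both the nonzero vector $Le_d$ and the vector $x_0$; hence $x_0$ and $Le_d$ are collinear and the desired scalar $s_0$ exists. I do not anticipate a real obstacle here: the argument is a linear-algebraic unpacking of spherical symmetry. The only delicate point is the elementary fact that a spherically symmetric linear functional on $\R^{d-1}$ must vanish and a spherically symmetric quadratic form on $\R^{d-1}$ must be a multiple of $\|u\|^2$, and these are precisely the places where $d-1 \geq 2$ is used.
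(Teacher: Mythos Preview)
Your argument is correct and is essentially the paper's own: expand the squared norm, use spherical symmetry to force the quadratic form $\|Lu\|^2$ to be a multiple of $\|u\|^2$ and the linear pieces to vanish, then finish with the dimension count on $L(\R^{d-1}\times\{0\})^\perp$. One step you leave implicit---why the three remaining terms may be analyzed \emph{separately} rather than only their sum---is handled in the paper by first using the $u\mapsto -u$ parity to split the even quadratic piece $\|Lu\|^2$ from the odd linear part, and then varying $r=\|u\|$ (exploiting that $r\mapsto\sqrt{1+r^2}$ is nonconstant) to decouple $\lr{x_0,Lu}$ from $\lr{Le_d,Lu}$; making this explicit would close the only gap in your write-up.
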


\begin{proof}
Let us choose $u= \theta \in S^{d-2}\times \left\{ 0 \right\}$. Then 
\begin{equation*}
\|x_0 + Lu + \sqrt{1+u^2} L e_d \|^2 = \|L\theta \|^2 + \| \sqrt{2}Le_d +x_0 \|^2 + 2\lr{L\theta , \sqrt{2}Le_d +x_0}
\end{equation*}
is constant, and so does $\|L\theta \|^2 + 2\lr{L\theta , \sqrt{2}Le_d +x_0}$. Let us call $C_0$ its value. Then we have the polynomial equality
$$\|L\theta \|^2 + 2\lr{L\theta , \sqrt{2}Le_d +x_0} = C_0$$
which can hold for degree reasons only if $\|L\theta\|$ is a constant and $\lr{L\theta , \sqrt{2}Le_d +x_0}$ is a constant. Here we must assume $d \geq 3$, so the sphere $S^{d-2}$ contains an infinity of points.\\

The condition $\|L\theta\|$ constant holds only if $L|_{\Rd}$ preserves the norm, modulo a multiplicative constant. Thus coming back to the assumption of the lemma, with $u=r\theta$ for all $r \geq 0$, the quantity
$$\lr{L\theta , \sqrt{1+r^2}Le_d +x_0}$$
must depend only on $r$. Using $\theta$ and $-\theta$, for all $r$, $\lr{L\theta , \sqrt{1+r^2}Le_d +x_0}=0$. But since $L$ is invertible, the space spanned by $L\theta$ has dimension $d-1$. Thus the space spanned by the vectors $\sqrt{1+r^2}Le_d +x_0$ for $r\geq 0$ has dimension $1$, which proves that there exists $s_0$ such that $s_0 Le_d = x_0$.\end{proof}

Composing with an isometry we can assume that $L\left(\Rd\right) \subset \Rd$. Moreover $\| Lu \|$ depends only on $\|u\|$, which implies that $L$ restrained to $\Rd$ must be a multiple of an isometry. We then deduce that there exist $a,b,s_0$ such that $\| L\left(u + s e_d\right) +x_0 \|^2 = a^2 u^2 + b^2 \left(s+s_0\right)^2$, \textit{for all} $\left(u,s\right) \in \R^{d-1} \times \R$. 
Thus we get the fundamental relation between $f$ and $F$:
\begin{equation*}
\Ss f\left(u+se_d\right) = F\left(\sqrt{a^2 u^2 + b^2 \left(s+s_0\right)^2}\right).
\end{equation*}

In the case $\lambda_0 \neq 1$, let us call $g$ the function
\begin{equation*}
g\left(x\right) = f\left(\dfrac{x}{\lambda_0}\right).
\end{equation*}
Then $g$ satisfies $g\left(e_d\right) \neq 0$ and there exist $L,x_0$ satisfying \eqref{equat199}. It follows from the above study that there exist $a,b,s_0$ three real numbers such that for all $u \in \Rd$, $s \in \R$,
\begin{equation*}
\Ss g\left(u+se_d\right) = F\left(\sqrt{a^2 u^2 + b^2 \left(s+s_0\right)^2}\right).
\end{equation*}
But $\Ss f$ and $\Ss g$ are linked through
\begin{equation*}
\Ss g \left(u,s\right) = \lambda_0^{k+1} \Ss f \left(u,\lambda_0 s\right).
\end{equation*}
Thus changing $b$ to $\lambda_0 b$ and $s_0$ to $\frac{s_0}{\lambda_0}$, we have the same conclusion.\\

Now changing $F$ to $G=F\left(\sqrt{ab} \cdot\right)$, G remains nonincreasing and we get
\begin{equation*}
\Ss f\left(u+se_d\right) = F\left(\sqrt{a^2 u^2 + b^2 \left(s+s_0\right)^2}\right) = G\left(\sqrt{\dfrac{a}{b}u^2 + \dfrac{b}{a}\left(s+s_0\right)^2} \right),
\end{equation*}
reducing the number of unknown parameters in our system. The following lemma sums up the situation:

\begin{lem}
Let $f$ be a radial nonincreasing extremizer for \eqref{map1}. Then there exist a nonincreasing, function $G$, $s_0 \in \R$ and $c$ a positive number, such that for all $u+se_d \in \R^d$,
\begin{equation*}
\Ss f\left(u+se_d\right) = G\left(\sqrt{c u^2 + \dfrac{1}{c} \left(s+s_0\right)^2}\right).
\end{equation*}
\end{lem}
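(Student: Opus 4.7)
The plan is to bundle together the partial conclusions proved in the preceding paragraphs. First, since $f$ is a nonnegative extremizer, so is $\Ss f$ by Lemma~\ref{l13}, and the section's standing hypothesis then provides a radial nonincreasing $F$, an invertible linear $L$, and a vector $x_0\in\R^d$ such that $\Ss f(x)=F(\|x_0+Lx\|)$. By the dilation invariance of \eqref{map1} and the fact that $f$ is not identically zero, I may replace $f$ by a suitable dilate so that $f(e_d)\neq 0$; the discussion immediately preceding the lemma shows that this reduction only rescales the parameters $b$ and $s_0$, hence preserves the form of the conclusion.

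Next I would evaluate $\Ss f$ along the curve $u\mapsto(u,\sqrt{1+u^2})$ for $u\in\Rd$. Because $f$ is radial and this curve lies on the unit sphere, the identity \eqref{equ17} takes the form
\[
F(\|x_0+Lu+\sqrt{1+u^2}\,Le_d\|) \;=\; \frac{f(e_d)}{(1+u^2)^{(k+1)/2}}.
\]
In particular, the left-hand side depends on $u$ only through $\|u\|$. The first of the two preceding sublemmas shows that $F$ is strictly decreasing, hence injective, so in fact the inner quantity $\|x_0+Lu+\sqrt{1+u^2}Le_d\|$ itself must depend on $u$ only through $u^2$.

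Now I would invoke the second preceding sublemma. Its output is threefold: $L(\Rd)\subset\spane(Le_d)^\perp$, the restriction $L|_{\Rd}$ is a positive multiple of an isometry, and $x_0=s_0 Le_d$ for some $s_0\in\R$. Composing with an ambient isometry if necessary, which does not affect the form of the conclusion, I can arrange $L(\Rd)\subset\Rd$. Expanding the squared norm then produces
\[
\|L(u+se_d)+x_0\|^2 \;=\; a^2 u^2 + b^2(s+s_0)^2
\]
for some $a,b>0$, now valid for \emph{all} $(u,s)\in\R^{d-1}\times\R$, not just along the unit sphere.

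Finally I would symmetrize the coefficients. Setting $G(r)=F(\sqrt{ab}\,r)$, which remains nonincreasing, the argument of $F$ rewrites as $\sqrt{(a/b)u^2+(b/a)(s+s_0)^2}$, and with $c=a/b>0$ the claimed formula follows. The real content of the proof already lies in the two preceding sublemmas; the main potential obstacle is the rigidity of $L$ supplied by the second sublemma, whose polynomial-identity argument genuinely requires $d\geq 3$, and which is what ultimately forces the anisotropic quadratic form to split into the two orthogonal summands $c u^2$ and $c^{-1}(s+s_0)^2$.
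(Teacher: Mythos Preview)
Your proposal is correct and follows exactly the paper's approach: the lemma is explicitly introduced in the paper as a summary (``The following lemma sums up the situation''), and you have faithfully bundled the preceding sublemmas---strict monotonicity of $F$, the rigidity of $L$, the reduction via an isometry to $\|L(u+se_d)+x_0\|^2=a^2u^2+b^2(s+s_0)^2$, and the rescaling $G=F(\sqrt{ab}\,\cdot)$, $c=a/b$---in the same order. One small slip: the curve $u\mapsto(u,\sqrt{1+u^2})$ does not itself lie on the unit sphere; rather its image under $\Phi$ does, which is why $f$ evaluates to $f(e_d)$ in \eqref{equ17}.
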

Thus we accomplished our first step in our identification program: we know how the operator $\Ss$ acts on radial extremizers. Now we have to understand how $V$ acts on functions $g$ whose form is
\begin{equation*}
g: u+se_d \mapsto G\left(\sqrt{c u^2 + \dfrac{1}{c} \left(s+s_0\right)^2}\right).
\end{equation*}
This is way easier. First, we can assume that $s_0 = 0$: indeed, $g\left(\cdot - s_0 e_d\right)^\ast = g^\ast$. Moreover, $G$ is decreasing and so the level sets of $g$ are ellipsoids $c u^2 + c^{-1} s^2 \leq R^2$. The corresponding rearranged sets are balls of radius $R'$, with $R'$ satisfying the relation
$$ R'^d = \dfrac{R^{d-1}}{c^\frac{d-1}{2}} c^\frac{1}{2} R =  \dfrac{R^d}{c^\frac{d-2}{2}}. $$
 Thus  
\begin{equation*}
Vg \left(s e_d\right) = G\left(c^\frac{d-2}{2d} s \right) = \dfrac{1}{\left(c^\frac{d-1}{d} s - s_0\right)^{k+1}} f\left( \dfrac{e_d}{c^\frac{d-1}{d} s - s_0} \right),
\end{equation*}
coming back to the relation defining $G$, and using that $f$ is radial. And then
\begin{equation*}
Vg \left( \left(s+c^\frac{d}{d-1} s_0 \right) e_d\right) = \dfrac{1}{\left(c^\frac{d-1}{d} s\right)^{k+1}} f\left( \dfrac{e_d}{c^\frac{d-1}{d} s} \right).
\end{equation*}
The right function even, and so is $Vg$. This forces $s_0$ to be equal to $0$.\\

This characterizes the action of the operator $V\Ss$ on radial extremizers. Indeed, calling $\lambda = c^\frac{d-1}{d}$, we get the following lemma:

\begin{lem}
Let $f$ be a radial nonincreasing extremizer. Then there exists $\lambda$ such that
\begin{equation*}
V\Ss f \left(x\right) = \dfrac{1}{\lambda^{k+1} \| x \|^{k+1}} f \left(\dfrac{e_d}{\lambda \|x \|} \right).
\end{equation*}
\end{lem}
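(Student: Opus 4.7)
The claim is essentially a consolidation of the computations that immediately precede it; my plan is to assemble them into a single clean identity. Starting from the preceding lemma, $g := \Ss f$ has the form
$$g(u + s e_d) = G\bigl(\sqrt{c u^2 + c^{-1}(s+s_0)^2}\bigr)$$
for some $c > 0$, $s_0 \in \R$, and nonincreasing $G$, so its superlevel sets are translated ellipsoids with constant eccentricity.

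My first step would be to compute $Vg$ by matching volumes: the ellipsoid $\{cu^2 + c^{-1}(s+s_0)^2 \leq R^2\}$ has semi-axes $R/\sqrt{c}$ (repeated $d-1$ times) and $R\sqrt{c}$, hence volume proportional to $R^d c^{-(d-2)/2}$; the ball of equal volume has radius $R\, c^{-(d-2)/(2d)}$, giving $Vg(x) = G\bigl(c^{(d-2)/(2d)} \|x\|\bigr)$.

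I would then translate this back into a formula in $f$. Since $\Ss$ is an involution, $f = \Ss g$ yields $g(0,\sigma) = |\sigma|^{-(k+1)} f(e_d/\sigma)$, where I use radiality of $f$ to identify $f(0,1/\sigma)$ with $f(e_d/\sigma)$. Combining this with $G(r) = g(0, r\sqrt{c} - s_0)$ and the formula for $Vg$, a direct computation gives, for $s > 0$,
$$V\Ss f(s e_d) = \frac{1}{|c^{(d-1)/d} s - s_0|^{k+1}}\, f\!\left(\frac{e_d}{c^{(d-1)/d} s - s_0}\right),$$
using the exponent identity $\tfrac{d-2}{2d} + \tfrac12 = \tfrac{d-1}{d}$.

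The only step that really requires a moment of thought is forcing $s_0 = 0$. Since $V\Ss f$ is a nonincreasing rearrangement, its restriction to the $e_d$-axis is even in $s$. By radiality of $f$ the map $\tau \mapsto |\tau|^{-(k+1)} f(e_d/\tau)$ is itself even, so the right-hand side above has two axes of symmetry in $s$, namely $0$ and $s_0/c^{(d-1)/d}$. This right-hand side tends to zero at infinity and is nontrivial ($f$ being a nonzero extremizer), so no second distinct axis of symmetry is compatible with the natural integrability, which forces $s_0 = 0$. Setting $\lambda = c^{(d-1)/d}$ and using the radiality of $V\Ss f$ to replace $s$ by $\|x\|$ on the left-hand side then yields the stated identity.
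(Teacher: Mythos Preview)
Your proof is correct and follows essentially the same route as the paper: compute $Vg$ via the ellipsoid-to-ball volume matching, express $G$ in terms of $f$ using $g=\Ss f$ along the $e_d$-axis, and then eliminate $s_0$ by the two-axes-of-symmetry argument. Your write-up is in fact a bit more explicit about why the double symmetry forces $s_0=0$ (periodicity versus integrability) than the paper's terse ``the right function is even, and so is $Vg$''.
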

Let us use again the competing symmetry theory: to construct an explicit extremizer of \eqref{map2} we used iterations of $V\Ss$, applied to \textit{any} extremizer. Let us choose $f_0$ a radial extremizer. From now we will regard all the radial functions as functions of the norm instead of functions on $\R^d$. Then $V \Ss f_0$ is still a radial extremizer and because of the previous lemma we know that there exists $\lambda$ such that 
\begin{equation*}
V\Ss f \left(r\right) = \left(\dfrac{1}{\lambda r}\right)^{k+1} f \left(\dfrac{1}{\lambda r} \right).
\end{equation*}
Let us do that again: there exists $\lambda'$ such that
\begin{align*}
\left(V\Ss\right)^2 f \left(r\right) & = \left(\dfrac{1}{\lambda' r}\right)^{k+1} \left(V\Ss f\right) \left(\dfrac{1}{\lambda' r} \right) \\
               & = \left(\dfrac{1}{\lambda' r} \dfrac{\lambda' r}{\lambda}\right)^{k+1} f \left( \dfrac{\lambda' r}{\lambda} \right)\\
               & = \dfrac{1}{\lambda^{k+1}} f \left( \dfrac{\lambda' r}{\lambda} \right).
\end{align*}
Since the operator $V \Ss$ preserves the norm, we must have $\lambda \lambda'^d =1$. With the parameter $\mu$ such that $\lambda'=\mu\lambda$ we get the following lemma:

\begin{lem}
Let $f$ be a radial, nonincreasing extremizer for \eqref{map1}. Then there exists a real number $\mu>0$ such that 
\begin{equation*}
\left(V \Ss\right)^2 f\left(r\right) = \mu^\frac{d}{p} f\left(\mu r\right).
\end{equation*}
\end{lem}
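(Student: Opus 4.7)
The plan is to apply the preceding lemma twice and then fix the scaling factor by invoking the $L^p$-norm preservation of $V\Ss$.

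First I would verify that $V\Ss f$ is itself a radial, nonincreasing extremizer. The operator $\Ss$ preserves both $\|f\|_p$ and $\|\RR f\|_q$ by Lemma \ref{l13}; the rearrangement $V$ preserves $\|f\|_p$, and in the extremal case the rearrangement inequality \eqref{inequat1} must be saturated, so $V$ preserves $\|\RR \cdot\|_q$ on extremizers as well; moreover $V$ produces a radial, nonincreasing output by construction. Hence the preceding lemma applies both to $f$, producing some $\lambda>0$, and to $V\Ss f$, producing some $\lambda'>0$. Substituting the resulting formula for $V\Ss f$ into the one for $(V\Ss)^2 f = V\Ss(V\Ss f)$, the powers of $r$ appearing in the argument of $f$ telescope exactly, and I expect to land on
\begin{equation*}
(V\Ss)^2 f(r) \;=\; \frac{1}{\lambda^{k+1}}\, f\!\left(\frac{\lambda'}{\lambda}\, r\right).
\end{equation*}

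Next I would determine the coefficient in front by $L^p$-norm preservation. Since $V$ and $\Ss$ each preserve the $L^p(\R^+, r^{d-1}\,dr)$-norm of nonnegative functions, so does $(V\Ss)^2$. Applying the change of variable $s = (\lambda'/\lambda) r$ to $\|(V\Ss)^2 f\|_p^p$ and using the identity $p(k+1)=d+1$, the norm equality should collapse to the single constraint $\lambda(\lambda')^d = 1$. Setting $\mu := \lambda'/\lambda$ and substituting $p=(d+1)/(k+1)$, the prefactor $\lambda^{-(k+1)}$ can be rewritten as $\mu^{d/p}$, which turns the display above into the claimed identity.

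I do not anticipate a genuine analytic obstacle here; the entire argument is symbol manipulation layered on top of the previous lemma. The only point that requires some care is the exponent bookkeeping, since one must reconcile the appearance of $k+1$, $d+1$, and $d/p$ via the relations $p(k+1)=d+1$ and $\mu = \lambda^{-(d+1)/d}$.
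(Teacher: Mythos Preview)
Your proposal is correct and follows essentially the same route as the paper: apply the preceding lemma to $f$ and then to $V\Ss f$, telescope to obtain $(V\Ss)^2 f(r)=\lambda^{-(k+1)}f\!\left(\tfrac{\lambda'}{\lambda}r\right)$, and use $L^p$-norm preservation together with $p(k+1)=d+1$ to force $\lambda(\lambda')^d=1$, which rewrites the prefactor as $\mu^{d/p}$ with $\mu=\lambda'/\lambda$. Your explicit check that $V\Ss f$ is again a radial nonincreasing extremizer is a detail the paper leaves implicit, but otherwise the arguments coincide.
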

That proves that the operator $V\Ss$ acts on radial, nonincreasing extremizers as a dilatation. Now we are almost done. Indeed, let us consider $f_n = \left(V \Ss\right)^{2n} f$. For each $n$, there exists $\mu_n$ such that
\begin{equation*}
\left(V \Ss\right)^{2n} f\left(r\right) = \mu_n^\frac{d}{p} f\left(\mu_n r\right).
\end{equation*}
But the sequence $f_n$ converges in $L^p$ to the extremizer $h$ described in theorem \ref{sharp}. Thus it converges weakly to a non-zero function, which is possible if and only if $\mu_n$ converges to a non-zero value. That ends the proof of \ref{uniqi}: every nonnegative radial extremizer can be written 
\begin{equation*}
x \mapsto \left[ \dfrac{1}{a+b \|x\|^2} \right]^\frac{k+1}{2}
\end{equation*}
with $a,b > 0$.

\end{document}